\newtheorem{thm}{Theorem}[section]
\newtheorem{lem}[thm]{Lemma}
\newtheorem{prop}[thm]{Proposition}
\newtheorem{defi}[thm]{Definition}
\newtheorem{qn}[thm]{Question}
\newtheorem{corl}[thm]{Corollary}
\newtheorem{xrem}[thm]{Remark}
\newtheorem{exm}[thm]{Example}
\DeclareMathOperator{\Nef}{{Nef}}
\DeclareMathOperator{\rk}{{rk}}
\DeclareMathOperator{\Mov}{{Mov}}
\DeclareMathOperator{\Pic}{{Pic}}
\DeclareMathOperator{\Sym}{{Sym}}
\DeclareMathOperator{\Eff}{{Eff}}
\DeclareMathOperator{\End}{{\mathcal{E}nd}}
\DeclareMathOperator{\Div}{{Div}}
\DeclareMathOperator{\Proj}{{Proj}}
\DeclareMathOperator{\Bl}{{Bl}}
\DeclareMathOperator{\vol}{{vol}}
\begin{document}
\baselineskip=15pt
\subjclass[2020]{Primary 14C20, 14E30, 14E99 ; Secondary 14D20, 14D06 }
\keywords{Pseudo-effective cone, Big Cone, Semistability, Projective bundle, Volume Function}

\author{Snehajit Misra}
\author{Anoop Singh} 

\address{Indraprastha Institute of Information Technology, Delhi, Okhla Industrial Estate, Phase
III, New Delhi, Delhi 110020, India.}
\email[Snehajit Misra]{misra08@gmail.com}

\address{Department of Mathematical Sciences\\ Indian Institute of Technology(BHU), Varanasi 211005, India.}
\email[Anoop Singh]{anoopsingh.mat@iitbhu.ac.in}

\begin{abstract}
 In this article, we compute the pseudo-effective cones of various projective bundles $\mathbb{P}_X(E)$  over higher dimensional varieties $X$ under some assumptions on $X$ as well as on the vector bundle $E$. We also compute  the volume function on fibre product  $\mathbb{P}(E)\times_C\mathbb{P}(F)$ of two projective bundles over a smooth irreducible complex projective curve $C$.
 In particular, we show that the volume function on the fiber product of two ruled surfaces is of polynomial type.
\end{abstract}

\title{On pseudo-effective cones of projective bundles and volume function}
\maketitle
\section{Introduction}
  
 A fundamental invariant of a smooth irreducible projective variety $X$ is the closure of the cone generated by effective divisors on $X$, called the pseudo-effective cone of $X$ and denoted by $\overline{\Eff}^1(X)$. This invariant
plays  a
 very crucial role in the understanding the geometry of the higher dimensional projective varieties. The interior of $\overline{\Eff}^1(X)$ is called Big cone of $X$, and its elements are called  big divisors.

  Miyaoka \cite{Miy87} computed the cones of nef and
effective divisors of a projective bundle $\mathbb{P}_C(E)$ over a smooth irreducible projective curve $C$ in characteristics 0, where $E$ is any rank $r$ vector bundle on $C$, and gives a numerical criterion for 
semi-stability of $E$ in terms of nefness of the normalized hyperplane class $\lambda_E$ on $\mathbb{P}_C(E)$. More generally, in \cite{Fu}, the cone of effective $k$-cycles in $\mathbb{P}_C(E)$
over an irreducible curve $C$ is described in terms of the numerical
data appearing in  Harder-Narasimhan filtration of the bundle $E$, which generalizes Miyaoka's result in the semistable case. However, in  these cases, the Picard number of the projective bundle $\mathbb{P}_C(E)$ is 2, and hence the pseudo-effective cones  of divisors are finite polyhedra  generated by two extremal rays in
a two dimensional real vector space. When the Picard number is at least 3, the pseudo-effective cones are not necessarily polyhedra. Recently, the pseudo-effective cones of divisors of the fiber product $\mathbb{P}_C(F)\times_C\mathbb{P}_C(E)$ of two projective bundles over a smooth irreducible complex projective curve $C$ which has Picard number 3 are computed in \cite{M21}. It turns out that the pseudo-effective cones $\overline{\Eff}^1\bigl(\mathbb{P}_C(F)\times_C\mathbb{P}_C(E)\bigr)$ are finite polyhedra.
In this article, our first observation is the following.
\begin{thm}\label{thm1.1}
 Let $E$ be a slope semistable vector bundle of rank $r$ on a smooth irreducible complex projective variety $X$ such that $c_2\bigl(\End(E)\bigr) = 2rc_2(E)-(r-1)c^2_1(E) = 0 \in H^4(X,\mathbb{Q})$. Then
 $$\overline{\Eff}^1\bigl(\mathbb{P}_X(E)\bigr) = \Bigl\{x\lambda_E + \pi^*\gamma \mid x\in \mathbb{R}_{\geq 0}, \gamma\in \overline{\Eff}^1(X)\Bigr\},$$
 where $\pi:\mathbb{P}_X(E) \longrightarrow X$ is the projection and $\lambda_E := c_1(\mathcal{O}_{\mathbb{P}_X(E)}(1)) - \frac{1}{r}\pi^*c_1(E)$.
 \end{thm}
We  give several examples to illustrate various applications of Theorem \ref{thm1.1} (see Example \ref{exm4.2}, Example \ref{exm4.4}, Example \ref{exm4.3} etc).

Next we restrict our attention in finding pseudo-effective cones of divisors on $\mathbb{P}_X(E)$ over higher dimensional varieties $X$ where either $E$ is an unstable with respect some polarization or $c_2\bigl(\End(E)\bigr)\neq 0$. In this context,
we  prove the following Theorems.
\begin{thm}\label{thm1.2}
 \rm Let $X$ be a smooth complex irreducible projective variety and let $E=\mathcal{O}_X\oplus \mathcal{L}$ be a rank 2 completely decomposable vector bundle on $X$ with $\mathcal{L}$ being nef.  We denote the numerical class of $\mathcal{O}_{\mathbb{P}(E)}(1)$ by $\xi$. Consider the following 2 cases :
 \begin{enumerate}
  \item  \bf Case (i) : \rm Let $\mathcal{L}\equiv 0$. Then $E$ is slope semistable  with $c_2\bigl(\End(E)\bigr) = 0$. In this case, by Theorem \ref{thm1.1} we have
  $$\overline{\Eff}^1\bigl(\mathbb{P}(E)\bigr) = \Bigl\{x\xi+\pi^*\gamma \mid x\in \mathbb{R}_{\geq 0}, \gamma \in \overline{\Eff}^1(X)\Bigr\}.$$

  \item \bf Case (ii) : \rm  Let $\mathcal{L}$ be not numerically trivial. In this case, $E$ is $H$-unstable for any ample divisor $H$, and
 $$\overline{\Eff}^1\bigl(\mathbb{P}(E)\bigr) = \Bigl\{ x\bigl(\xi-c_1(\pi^*\mathcal{L})\bigr) + \pi^*\gamma  \mid x\in \mathbb{R}_{\geq0}, \gamma \in \overline{\Eff}^1(X)\Bigr\}.$$
 \end{enumerate}
 \end{thm}
\begin{thm}\label{thm1.3}
 Let $X$ be a smooth irreducible complex projective variety of Picard number 1, and $L_X$ be an ample generator of $N^1(X)_{\mathbb{R}}$. Let $E = \mathcal{L}_1\oplus \mathcal{L}_2\oplus \cdots\oplus \mathcal{L}_r$  be a completely decomposable vector bundle of rank $r$ and $l=\max\bigl\{\mathcal{L}_i\cdot L_X^{d-1}\mid 1\leq i\leq r\bigr\}\geq 0$. We denote the numerical class of $\mathcal{O}_{\mathbb{P}(E)}(1)$ by $\xi$. Consider the following two cases :
 \begin{enumerate}
  \rm \item \bf Case (i) \rm: Suppose $\mathcal{L}_1\cdot L_X^{d-1} = \mathcal{L}_2\cdot L_X^{d-1} = \cdots = \mathcal{L}_r\cdot L_X^{d-1}$. Then $E$ is slope $L_X$-semistable with $c_2\bigl(\End(E)\bigr) = 0$, and the pseudo-effective cone $$\overline{\Eff}^1\bigl(\mathbb{P}_X(E)\bigr) = \Bigl\{x_0\bigl(\xi-c_1(\pi^*(E)\bigr)+x_1\pi^*(L_X) \mid x_0,x_1\in \mathbb{R}_{\geq 0}\Bigr\}.$$
 \item \bf Case (ii) \rm: Suppose $E$ is $L_X$-unstable.
 In this case, the pseudo-effective cone $$\overline{\Eff}^1\bigl(\mathbb{P}_X(E)\bigr) = \Bigl\{x_0\bigl(\xi-l\pi^*L_X\bigr)+x_1\pi^*(L_X) \mid x_0,x_1\in \mathbb{R}_{\geq 0}\Bigr\}.$$
\end{enumerate}
\end{thm}

 Let $L$ be a line bundle on  a smooth irreducible projective variety $X$ of dimension $n$. The volume of $L$, denoted by $\vol_X(L)$, is the non-negative number defined as follows:
$$\vol_X(L):=\limsup\limits_{n \to \infty}\frac{\dim_kH^0(X,L^{\otimes m})}{m^n/n!}.$$
The limsup is actually a limit (see \cite{F},\cite{T}). The volume function is  $n$-homogeneous and invariant under numerical equivalence.  Hence we have a well-defined mapping from the N\'{e}ron-Severi group $N^1(X)_{\mathbb{Q}}$ to $\mathbb{Q}_{\geq 0 }$ which we again denote by $\vol_X$. In fact, the function $$\vol_X : N^1(X)_{\mathbb{Q}} \mapsto \mathbb{Q}_{\geq 0 }$$
$$\xi \mapsto \vol_X(\xi)$$ on $N^1(X)_{\mathbb{Q}}$ extends uniquely to a continuous function $\vol_X: N^1(X)_{\mathbb{R}} \longrightarrow \mathbb{R}_{\geq 0}$ on the real N\'{e}ron-Severi group $N^1(X)_{\mathbb{R}}$.  By definition of big divisors, a line bundle $L$ is big if and only if $\vol_X(L) >0$. If $L$ is an ample line bundle, then by asymptotic Riemann-Roch one has $\vol_X(L) =L^n$. In particular, the volume function is polynomial  on the ample cone of $X$.  However, on the larger cone of big divisors, the behaviour of the volume function is not completely understood in general, and in such cases the volume functions are not so easy to calculate. More details about volume functions can be found in \cite{ELMNP} and \cite{L2}.

When $X$ is a surface, then the volume function is piecewise quadratic on the pseudoeffective cone of the surface $X$, and the volume function can be computed using the Zariski decomposition of pseudoeffective divisors on $X$ (see Chapter 2, \cite{L1}).

When $X$ is a toric variety, the pseudo-effective cone $\overline{\Eff}^1(X)$ is a polyhedral cone and it carries a decomposition
into finitely many closed polyhedral subcones on which the volume function vol$_X$ is given by a polynomial of degree equal to $\dim(X)$. The piecewise polynomial nature of vol$_X$ in the toric setting is illustrated in \cite{ELMN03}.

Bauer, Küronya, and Szemberg \cite{BKS03} have used Zariski decompositions to study the volume function when $X$ is a smooth projective surface. They show that in this case Big$(X)$ admits a decomposition into locally polyhedral chambers on which vol$_X(\xi)$ is a quadratic polynomial function of $\xi$. On the other
hand, they also exhibit a threefold $X$ for which vol$_X(\xi)$ is not piecewise polynomial (although it is locally analytic in this example). It remains a very
interesting open question to say something about the nature of the function
\begin{center}
vol$_X : N^1(X)_{\mathbb{R}} \longrightarrow  \mathbb{R}$
\end{center}
in general. The natural expectation here is that there exists a “large” open set $U\subseteq N^1 (X)_{\mathbb{R}}$ such that vol$_X$ is real analytic on each connected component of $U$.

In \cite{C}, the volume function has been calculated on projective bundle $\mathbb{P}_C(E)$ over a smooth irreducible curve $C$ defined over a field of characteristic zero . It is proved in \cite{C} that there is a subdivision of the real N\'{e}ron-Severi group $N^1(\mathbb{P}_C(E))_{\mathbb{R}}$ into sectors such that the volume function is polynomial on each sector.

In his thesis \cite{W}, Wolfe considers the case of split $\mathbb{P}^1$-bundle on a smooth projective variety $X$. More precisely, he computed the volume function on $\mathbb{P}_X(L_1\oplus L_2)$ for some line bundles $L_1$ and $L_2$. It is shown in \cite{W} that when $X$ is a surface $S$, then there exists a dense, open subset $U\subseteq $ Big$\bigl(\mathbb{P}_S(L_1\oplus L_2)\bigr)$ inside the big cone so that $\vol\vert_U$ is real analytic.

In \cite{ELMNP}, the following question is suggested about volume function on projective bundles over higher dimensional varieties (see Problem 2.13, Page 389, \cite{ELMNP}).
\begin{qn} What is the behaviour of volume function of line bundles on projective bundles $\mathbb{P}_X(E)$ over  higher dimensional varieties $X$? It is also interesting to see to what extent the geometry of $E$ is related to the volume function on $\mathbb{P}_X(E)$.
\end{qn}
Motivated by the above question, we calculate the volume of certain line bundles $L$ on the fibre product $\mathbb{P}_C(E)\times_C \mathbb{P}_C(F)$ of two projective bundles over a  smooth complex projective irreducible curve  $C$, where $E$ and $F$ are non-zero vector bundles on $C$  of rank $e$ and $f$ respectively. Our proofs closely follow the ideas in \cite{C}. Note that the fibre product $X=\mathbb{P}_C(E)\times_C \mathbb{P}_C(F)$ has Picard number 3, and the behaviours of different cones inside $N^1(X)_{\mathbb{R}}$ are hard to predict.

 There exists a unique filtration, called the Harder-Narasimhan filtration of $E$, as follows
\begin{align}\label{s0}
0=E_d\subsetneq E_{d-1}\subsetneq \cdots \subsetneq E_1\subsetneq E_0 =E
\end{align}
such that each sub-quotient $E_i/E_{i+1}$ is semistable for each $i\in\{0,1,2,\cdots,{d-1}\}$, and $$\mu_0<\mu_1<\cdots<\mu_{d-1}$$ where $$\mu_i = \mu(E_i/E_{i+1}) = \frac{\deg(E_i/E_{i+1})}{\rk(E_{i}/E_{i+1})}$$ for each $i$. Then one can define a probability measure $\nu_E$ on $\mathbb{R}$ as follows:
$$\nu_E := \frac{1}{\rk(E)}\sum\limits_{i=0}^{d-1}\{\rk(E_{i})-\rk(E_{i+1})\}\delta_{\mu_i},$$
where $\delta_x$ is the Dirac measure concentrated on $\{x\}$. For any real number $\varepsilon > 0$, let $T_{\varepsilon}$ be the operator on the set of all Borel probability measures on $\mathbb{R}$ which is defined by the following property: if $\nu$ is Borel probability measure and if $f$ is a continuous function of compact support on $\mathbb{R}$, then
$$\int\limits_{\mathbb{R}} f(x)T_{\varepsilon}\nu (dx) = \int\limits_{\mathbb{R}} f(\varepsilon x)\nu(dx).$$
Let 
\begin{align}\label{seq2}
 0=F_h\subsetneq F_{h-1}\subsetneq F_{h-2}\subsetneq\cdots\subsetneq F_{1} \subsetneq F_0 = F
\end{align}
be the Harder-Narasimhan filtration of $F$, and let $\mu'_j = \mu(F_j/F_{j+1})$ for each $j$ so that we have $$\mu'_{0} < \mu'_{1} < \cdots <\mu'_{h-2} < \mu'_{h-1}.$$
Consider the following commutative fibre product diagram:
\begin{center}
 \begin{tikzcd} 
\mathbb{P}(\mathcal{E}) = \mathbb{P}_C(F)\times_C \mathbb{P}_C(E) = \mathbb{P}(\psi^*E) \arrow[r, "\pi_2"] \arrow[d, "\pi_1"]
& \mathbb{P}_C(E)  \arrow[d,""]\\
 \mathbb{P}_C(F) \arrow[r, "\psi" ]
& C
\end{tikzcd}
\end{center}
We call  $\pi = \psi \circ \pi_1$ and $\mathcal{E} = \psi^*E$. We denote by $K:=k(C)$ the function field of the smooth curve $C$. Consider a line bundle $L = \mathcal{O}_{\mathbb{P}(\mathcal{E})}(m) \otimes \pi_1^*\mathcal{O}_{\mathbb{P}(F)}(l)\otimes \pi_1^*\psi^*\mathcal{M}$ on $\mathbb{P}(\mathcal{E})$, for some line bundle $\mathcal{M}$ of degree $a$ on $C$ and for some integers $m,l>0$.

We consider the vector $\underline{s} := (s_1,s_2,\cdots,s_e,s'_1,s'_2,\cdots,s'_f) \in \mathbb{R}^{e}\times\mathbb{R}^{f}$ such that the value $\mu_i$ appear exactly $r_i = \rk(E_i/E_{i+1})$ times in the first $e$ coordinates and the value $\mu_j'$ appears exactly $r'_j = \rk(F_j/F_{j+1})$ times in the last $f$ coordinates.  Consider the simplices $\Delta_m$ and $\Delta_l$  defined as follows:
$$\Delta_m = \Bigl\{(x_1,x_2,\cdots,x_e)\in\mathbb{R}^e \mid 0\leq x_i \leq m, \sum\limits_{i=1}^{e}x_i = m\Bigr\},$$
$$\Delta_l=\Bigl\{(x'_1,x'_2,\cdots,x'_f)\in \mathbb{R}^f\mid 0\leq x'_j \leq l,\sum\limits_{i=1}^fx'_j= l\Bigr\}.$$
We prove the following useful result to compute the volume of the line bundle $L$ on the fiber product $X = \mathbb{P}_C(F)\times_C \mathbb{P}(E)$.
\begin{thm}
 Let $\eta$ be the Lebesgue measure on $\Delta_m\times \Delta_l$ normalized such that $\eta(\Delta_m\times \Delta_l) = m^e\cdot n^f$. Then  $T_{\frac{1}{n}}\nu_{\Sym^{mn}(E)\otimes \Sym^{ln}(F)}$ converges vaguely (see Definition \ref{defi5.1}) to $\phi_{\underline{s}*}\eta$, where $$\phi_{\underline{s}} : \Delta_m \times \Delta_l\longrightarrow \mathbb{R}$$ such that\hspace{3cm} $(x_1,x_2,\cdots,x_e,x'_1,\cdots,x'_f)\mapsto\sum\limits_{i=1}^{e} s_ix_i+\sum\limits_{j=1}^{f}s'_jx'_j$.
 \end{thm}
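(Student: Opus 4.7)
The plan is to make the Harder--Narasimhan filtration of $\Sym^{mn}(E)\otimes\Sym^{ln}(F)$ fully explicit, read off the measure $\nu_{\Sym^{mn}(E)\otimes\Sym^{ln}(F)}$ from it, and then pass to the vague limit via Stirling combined with a Riemann-sum argument, closely following the curve case treated in \cite{C}. The key bundle-theoretic input is that in characteristic zero, tensor products and symmetric powers of semistable bundles on $C$ remain semistable (Ramanan--Ramanathan). Applying this to (\ref{seq1}) and (\ref{seq2}) produces on $\Sym^{mn}(E)\otimes\Sym^{ln}(F)$ a filtration whose graded pieces
\[
G_{\underline k,\underline l}:=\bigotimes_{i=0}^{d-1}\Sym^{k_i}(E_i/E_{i+1})\otimes\bigotimes_{j=0}^{h-1}\Sym^{l_j}(F_j/F_{j+1})
\]
are each semistable, with $\sum_ik_i=mn$, $\sum_jl_j=ln$, slope $\sigma(\underline k,\underline l):=\sum_ik_i\mu_i+\sum_jl_j\mu'_j$, and rank $\prod_i\binom{k_i+r_i-1}{r_i-1}\prod_j\binom{l_j+r'_j-1}{r'_j-1}$, where $r_i=\rk(E_i/E_{i+1})$ and $r'_j=\rk(F_j/F_{j+1})$. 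Grouping these pieces by common slope recovers the HN filtration of $\Sym^{mn}(E)\otimes\Sym^{ln}(F)$.

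Next, I would test vague convergence against an arbitrary continuous compactly-supported $g:\mathbb{R}\to\mathbb{R}$. Unwinding the definitions of $\nu_{\bullet}$ and $T_{\varepsilon}$,
\[
\int g\,d\bigl(T_{1/n}\nu_{\Sym^{mn}(E)\otimes\Sym^{ln}(F)}\bigr)=\frac{1}{R_n}\sum_{\underline k,\underline l}g\!\left(\tfrac{\sigma(\underline k,\underline l)}{n}\right)\prod_i\binom{k_i+r_i-1}{r_i-1}\prod_j\binom{l_j+r'_j-1}{r'_j-1},
\]
where $R_n=\binom{mn+e-1}{e-1}\binom{ln+f-1}{f-1}$. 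Changing variables to $y_i=k_i/(mn)$, $y'_j=l_j/(ln)$ and using Stirling in the form $\binom{k+r-1}{r-1}\sim k^{r-1}/(r-1)!$, the right-hand side becomes a Riemann sum for an integral over $\{\sum y_i=1\}\times\{\sum y'_j=1\}$ whose integrand is a constant multiple of $\prod_iy_i^{r_i-1}\prod_j(y'_j)^{r'_j-1}$. The normalizing prefactors combine so that the limit is the pushforward, by the appropriate slope map, of the product of Dirichlet distributions with parameters $(r_0,\ldots,r_{d-1})$ and $(r'_0,\ldots,r'_{h-1})$.

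To match this with $\phi_{\underline s\,*}\eta$, I would invoke the standard aggregation property of the Dirichlet family: if $(x_1,\ldots,x_e)$ is uniform on $\Delta$ and we group the coordinates into consecutive blocks of sizes $r_0,\ldots,r_{d-1}$ (which is possible since $\sum r_i=e$), then the block-sum vector $(\sum_{j\in\text{block }i}x_j)_i$ is Dirichlet$(r_0,\ldots,r_{d-1})$; an analogous statement holds for $\Delta'$. Because $\underline s$ is constructed by repeating $\mu_i$ exactly $r_i$ times (resp.\ $\mu'_j$ exactly $r'_j$ times), the linear form $\sum s_ix_i+\sum s'_jx'_j$ factors as block aggregation followed by the slope map, which identifies $\phi_{\underline s\,*}\eta$ with the Dirichlet pushforward obtained in the previous paragraph.

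I expect the main obstacle to be uniformity of the Riemann-sum convergence near the boundary of the simplex, where some $k_i$ or $l_j$ stays bounded and the Stirling asymptotic $\binom{k+r-1}{r-1}\sim k^{r-1}/(r-1)!$ ceases to be sharp. One must verify that lattice points within an $\epsilon$-neighbourhood of the boundary contribute at strictly lower order in $n$ than the bulk contribution of order $n^{e+f-2}$, so that the interior Riemann sum controls the limit; this is the product-form analogue of the estimate carried out for a single bundle in \cite{C}, and treating it cleanly in the two-bundle case is the central technical step.
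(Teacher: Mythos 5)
Your proposal is correct in outline, but it takes a genuinely different route through the key computation than the paper does. You keep the ranks $\prod_i\binom{k_i+r_i-1}{r_i-1}\prod_j\binom{l_j+r'_j-1}{r'_j-1}$ of the Harder--Narasimhan graded pieces as explicit weights, approximate them by Stirling, read the resulting sum as a Riemann sum for a product of Dirichlet densities on the low-dimensional simplices indexed by the HN quotients of $E$ and $F$, and then identify the limit with $\phi_{\underline{s}*}\eta$ via the Dirichlet aggregation property. The paper instead performs an \emph{exact} combinatorial unfolding before any limit is taken: since $\binom{a_i+r_i-1}{r_i-1}$ counts the compositions of $a_i$ into $r_i$ non-negative parts, and since $\underline{s}$ repeats $\mu_i$ exactly $r_i$ times, the rank of the graded piece of slope $v_a$ is exactly the number of lattice points $\underline{b}\in\mathbb{N}^e$ with $\sum b_i=mn$ and $\sum b_is_i=v_a$ (and similarly for $F$). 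This rewrites $\nu_{\Sym^{mn}(E)\otimes\Sym^{ln}(F)}$ as the normalized counting measure on $\bigl(\tfrac{1}{mn}\mathbb{N}^e\cap\Delta\bigr)\times\bigl(\tfrac{1}{ln}\mathbb{N}^f\cap\Delta'\bigr)$ pushed forward by the linear form $\phi_{\underline{s}}$, so the vague limit reduces to equidistribution of lattice points in a simplex --- no Stirling, no Dirichlet distributions, and in particular no boundary analysis. Your flagged technical step (degradation of the Stirling asymptotic where some $k_i$ or $l_j$ stays bounded) is real but surmountable: a continuous compactly supported test function is bounded, so the contribution of any family of graded pieces is controlled by the fraction of the total rank $\binom{mn+e-1}{e-1}\binom{ln+f-1}{f-1}$ that they carry, and the pieces with some $k_i\le\epsilon mn$ carry an $O(\epsilon)$ fraction; still, the paper's discrete identity is precisely the lattice-point avatar of your Dirichlet aggregation step, and deploying it before the limit makes the whole issue evaporate. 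One shared wrinkle worth noting: after rescaling $k_i/(mn)$ and $l_j/(ln)$ into $\Delta\times\Delta'$ and applying $T_{1/n}$, the linear form naturally acquires factors $m$ and $l$ (i.e., one gets $m\sum s_ix_i+l\sum s'_jx'_j$) unless these are absorbed into $\underline{s}$; your change of variables has the same normalization issue as the paper's displayed manipulation, and you should track it explicitly.
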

As a corollary, we compute the volume of  certain line bundles $L$ on the fiber product $X = \mathbb{P}_C(F)\times_C \mathbb{P}(E)$.
\begin{corl}
 Let $L = \mathcal{O}_{\mathbb{P}(\mathcal{E})}(m) \otimes \pi_1^*\mathcal{O}_{\mathbb{P}(F)}(l)\otimes \pi_1^*\psi^*\mathcal{M}$ be a line bundle on $X= \mathbb{P}_C(F)\times_C \mathbb{P}_C(E)$ for some line bundle $\mathcal{M}$ of degree $a$ on a smooth irreducible complex curve $C$ such that $L_{K}$ is big. Then
 $$\vol_X(L) =\dim(X)\vol_{X_K}(L_K) \int (x+a)_+ \phi_{\underline{s}*}\eta (dx),$$
 where $x_+ = \max\{x,0\}$.
 \end{corl}
 In particular, we show that the volume function $\Phi(m,l,a):=\vol_X(L)$ of such line bundles $L$ on the fiber product $X=\mathbb{P}_C(F)\times_C\mathbb{P}_C(E)$ of two ruled surfaces is of polynomial type in the variable $m,l,a$.
 \section{Notation and Convention}
Throughout this article, all the algebraic varieties are assumed to be irreducible and defined over the field of characteristic zero.

The projective bundle $\mathbb{P}_X(E)$ associated to a vector bundle 
$E$ over a projective variety $X$ is defined as $\mathbb{P}_X(E) :=  \Proj\bigl(\bigoplus\limits_{m\geq  0} \Sym^m(E)\bigr)$ together with the projection map $\pi : \mathbb{P}_X(E)\longrightarrow X$.
We will simply write $\mathbb{P}(E)$ whenever the base space $X$ is clear from the context. The numerical class of the tautological line bundle $\mathcal{O}_{\mathbb{P}(E)}(1)$ will be denoted by $\xi$ unless otherwise specified. The numerical class of a divisor $D$ will be denoted by  $\bigl[D\bigr]$.
The rank of a vector bundle $E$ will be denoted by $\rk(E)$.
The set of non-negative  real numbers (resp. rational numbers) will be denoted by $\mathbb{R}_{\geq 0}$ (resp. $\mathbb{Q}_{\geq 0}$).
 \section{Preliminaries}
 In this section we recall the definition of pseudo-effective cones and its main properties. We also recall the characterization of semistable bundles with vanishing discriminant over higher dimensional projective varieties which we will use to prove one of our main observations.
 \subsection{Nef cone and Pseudo-effective cone}
 Let $X$ be a smooth projective variety of dimension $n$ and $\mathcal{Z}_1(X)$ (respectively $\mathcal{Z}^1(X) = \Div(X)$) denotes the free abelian group generated by $1$-dimensional (respectively $1$-codimensional) subvarieties on $X$.
 Two cycles $Z_1$,$Z_2 \in \mathcal{Z}_1(X)$ are said to be numerically equivalent, denoted by $Z_1\equiv Z_2$ if $Z_1\cdot \gamma =  Z_2\cdot\gamma $
  for all $\gamma \in \mathcal{Z}^1(X)$. The \it numerical groups \rm  $ N_1(X)_{\mathbb{R}}$ are defined as the quotient of
  $\mathcal{Z}_1(X) \otimes \mathbb{R}$ modulo numerical equivalence.
  
  Let 
  \begin{center}
  $\Div^0(X) := \bigl\{ D \in \Div(X) \mid D\cdot C = 0 $ for all curves $C$ in $X \bigr\} \subseteq \Div(X)$.
  \end{center}
 be the subgroup of $\Div(X)$ consisting of numerically trivial divisors. The quotient $\Div(X)/\Div^0(X)$ is called the N\'{e}ron Severi group of $X$, and is denoted by $N^1(X)_{\mathbb{Z}}$.
   The N\'{e}ron Severi group  $N^1(X)_{\mathbb{Z}}$ is a free abelian group of finite rank.
 Its rank, denoted by $\rho(X)$ is called the Picard number of $X$. In particular, $N^1(X)_{\mathbb{R}}$ is called the real N\'{e}ron 
 Severi group and $N^1(X)_{\mathbb{R}}  := N^1(X)_{\mathbb{Z}} \otimes \mathbb{R} := \bigl(\Div(X)/\Div^0(X)\bigr) \otimes \mathbb{R}$.
The intersection product induces a perfect pairing
 \begin{align*}
N^1(X)_{\mathbb{R}} \times N_1(X)_{\mathbb{R}} \longrightarrow \mathbb{R}
\end{align*}
which implies $N^1(X)_{\mathbb{R}} \cong (N_1(X)_{\mathbb{R}})^\vee$. The convex cone generated by the set of all effective $1$-cycles in $N_1(X)_\mathbb{R}$ is denoted by $\Eff_1(X)$ and its closure $\overline{\Eff}_1(X)$ is called the \it pseudo-effective cone \rm  of $1$-cycles in $X$.

Similarly, the convex cone generated by the set of all effective divisors in $N^1(X)_\mathbb{R}$ is denoted by $\Eff^1(X)$ and its closure $\overline{\Eff}^1(X)$ is called the \it pseudo-effective cone \rm  of divisors in $X$. The \it nef cone \rm of divisors are defined as follows :
\begin{align*}
 \Nef^1(X) := \bigl\{ \alpha \in N^1(X) \mid \alpha \cdot \beta \geq 0 \hspace{2mm} \forall \beta \in \overline{\Eff}_1(X)\bigr\}.  
\end{align*}
The interior int$\bigl(\overline{\Eff}^1(X)\bigr)$ is the Big cone of big divisors in $N^1(X)_{\mathbb{R}}$,  denoted by Big$(X)$.

An irreducible curve $C$ in $X$ is called \it movable \rm if there exists an algebraic family of irreducible curves $\{C_t\}_{t\in T}$ such 
that $C = C_{t_0}$ for some $t_0 \in T$ and $\bigcup_{t \in T} C_t \subset X$ is dense in $X$.

A class $\gamma \in N_1(X)_{\mathbb{R}}$ is called movable if there exists a movable curve $C$
such that $\gamma = [C]$ in $N_1(X)_{\mathbb{R}}$. The closure of the cone generated by movable classes in
$N_1(X)_{\mathbb{R}}$, denoted by $\overline{\Mov}_1(X)$ is called the \it movable cone\rm. By \cite{BDPP13} $\overline{\Mov}_1(X)$ is
the dual cone to $\overline{\Eff}^1(X)$. We have the following inclusions among these cones : $\Nef^1(X) \subseteq \overline{\Eff}^1(X)$ and $\overline{\Mov}_1(X) \subseteq \overline{\Eff}_1(X)$ always. We refer the
reader to \cite{L1},\cite{L2} for more details about these cones.
\subsection{Semistability of Vector bundles}
Let $X$ be a smooth complex projective variety of dimension $n$ with a fixed ample line bundle $H$ on it.
For a torsion-free coherent sheaf $\mathcal{G}$ of rank $r$ on $X$, the $H$-slope of $\mathcal{G}$ is defined as 
\begin{align*}
\mu_H(\mathcal{G}) := \frac{c_1(\mathcal{G})\cdot H^{n-1}}{r} \in \mathbb{Q}.
\end{align*}
A non-zero torsion-free coherent sheaf $\mathcal{G}$ on $X$ is said to be $H$-semistable if $\mu_H(\mathcal{F}) \leq \mu_H(\mathcal{G})$ for
all subsheaves $\mathcal{F}$ of $\mathcal{G}$.
A vector bundle $E$ on $X$ is called $H$-unstable if it is not $H$-semistable. For every vector bundle $E$ on $X$, there is a unique filtration
\begin{align*}
 0 = E_d \subsetneq E_{d-1} \subsetneq E_{d-2} \subsetneq\cdots\subsetneq E_{1} \subsetneq E_0 = E
\end{align*}
of subsheaves of $E$, called the Harder-Narasimhan filtration of $E$, such that $E_i/E_{i+1}$ is $H$-semistable torsion-free sheaf for each $i \in \{ 0,1,2,\cdots,d-1\}$
and $$\mu_H\bigl(E_{d-1}/E_{d}\bigr) > \mu_H\bigl(E_{d-2}/E_{d-1}\bigr) >\cdots> \mu_H\bigl(E_{0}/E_{1}\bigr).$$
We define $Q_1:= E_{0}/E_{1}$ and $\mu_{\min}(E) := \mu_H(Q_1) = \mu_H\bigl(E_{0}/E_{1}\bigr)$ and $\mu_{\max}(E) := \mu_H\bigl(E_{d-1}/E_d\bigr).$

For a vector bundle $E$ of rank $r$ over $X$, the element $c_2\bigl(\End(E)\bigr) \in H^4(X,\mathbb{Q})$ is called the discriminant of $E$. We recall the following result about semistable bundle with vanishing discriminant.
\begin{thm}\label{thm3.1}\rm[\cite{N99}  or  [Theorem 1.2,\cite{B-B}]
\it  Let $E$ be a vector bundle of rank $r$ on a smooth complex projective variety $X$, and $\pi : \mathbb{P}(E) \longrightarrow X$ be the projection. Then the following are equivalent
 
 \rm(1) \it $E$ is semistable and $c_2\bigl(\End(E)\bigr) = 0$.
 
 \rm(2) \it $\lambda_E := c_1(\mathcal{O}_{\mathbb{P}(E)}(1)) - \frac{1}{r}\pi^*c_1(E) \in \Nef^1\bigl(\mathbb{P}(E)\bigr)$.
 
 \rm(3) \it For every pair of the form $(\phi,C)$, where $C$ is a smooth projective curve and $\phi : C \longrightarrow X$ is a non-constant morphism, $\phi^*(E)$ is semistable.
\end{thm}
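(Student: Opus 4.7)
The plan is to close the cycle $(2) \Rightarrow (3) \Rightarrow (1) \Rightarrow (2)$, using Miyaoka's criterion on curves \cite{Miy87}, Mehta--Ramanathan restriction, and the equality case of Bogomolov's inequality as external inputs. For the first implication $(2) \Rightarrow (3)$, I would argue formally: given a non-constant morphism $\phi : C \to X$ from a smooth projective curve, functoriality of the projective bundle construction produces a morphism $\Phi : \mathbb{P}(\phi^*E) \to \mathbb{P}(E)$ with $\Phi^*\mathcal{O}_{\mathbb{P}(E)}(1) = \mathcal{O}_{\mathbb{P}(\phi^*E)}(1)$, so $\Phi^*\lambda_E = \lambda_{\phi^*E}$. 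Since nefness is preserved by pullback along proper morphisms, Miyaoka's theorem on curves then identifies this nefness with semistability of $\phi^*E$.

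For $(3) \Rightarrow (1)$, I would obtain semistability of $E$ from Mehta--Ramanathan: a hypothetical maximal destabilizing subbundle of $E$ would restrict to a destabilizing subsheaf of $E|_C$ on a general complete intersection curve cut out by sufficiently high multiples of $H$, contradicting (3). To get $c_2(\End(E)) = 0 \in H^4(X,\mathbb{Q})$, I would reduce by Hard Lefschetz to checking $c_2(\End(E)) \cdot H^{n-2} = 0$; restricting to a general complete intersection surface $S$, Mehta--Ramanathan gives that $E|_S$ is semistable and (3) transfers to $S$ by composition. The surface case of the theorem (Miyaoka's original result together with the equality case of Bogomolov's inequality) then yields $c_2(\End(E|_S)) = 0$, hence the required intersection number vanishes.

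The substantive direction is $(1) \Rightarrow (2)$. I would test nefness of $\lambda_E$ against irreducible curves $\gamma \subset \mathbb{P}(E)$ case by case. If $\pi(\gamma)$ is a point, then $\gamma$ lies in a fiber $\mathbb{P}^{r-1}$ where $\pi^* c_1(E)$ restricts to zero, so $\lambda_E \cdot \gamma$ equals the degree of $\mathcal{O}(1)$ along $\gamma$, which is non-negative. Otherwise $B := \pi(\gamma)$ is a curve in $X$ with normalization $\phi : C \to B \hookrightarrow X$, and the key input I would use is that $(1)$ implies $\phi^*E$ is semistable on $C$; this is Bogomolov's restriction theorem for bundles with vanishing discriminant, as established by Nakayama \cite{N99} (see also \cite{B-B}). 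Granted this, Miyaoka gives that $\lambda_{\phi^*E}$ is nef on $\mathbb{P}(\phi^*E)$; the curve $\gamma$ lifts to $\tilde\gamma$ there via the canonical map $\mathbb{P}(\phi^*E) \to \mathbb{P}(E)$ covering $\phi$, and using $\Phi^*\lambda_E = \lambda_{\phi^*E}$ I would conclude $\lambda_E \cdot \gamma = \lambda_{\phi^*E} \cdot \tilde\gamma \geq 0$.

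The main obstacle will be the pullback-semistability input in the last step (equivalently the direct implication $(1) \Rightarrow (3)$): one must show that semistability plus vanishing discriminant on a higher dimensional $X$ is preserved under pullback to every normalization of a curve in $X$, not merely under restriction to a generic complete intersection curve as Mehta--Ramanathan supplies. This is the genuine higher-dimensional content of the theorem, and the step where I would cite the statement directly from \cite{N99} rather than attempt to reprove it.
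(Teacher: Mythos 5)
The first thing to say is that the paper does not prove this statement at all: Theorem \ref{thm3.1} is quoted verbatim from \cite{N99} and from Theorem 1.2 of \cite{B-B}, so there is no in-paper argument to compare yours against. Your cycle $(2)\Rightarrow(3)\Rightarrow(1)\Rightarrow(2)$ is a reasonable reconstruction of how those references proceed, and you correctly isolate the genuinely hard content --- that semistability plus vanishing discriminant is preserved under pullback to \emph{every} curve, not just generic complete intersection curves --- as the step that must be imported from \cite{N99}. Since that implication is essentially the equivalence $(1)\Leftrightarrow(3)$ itself, your proposal is, like the paper, ultimately a citation of the key point rather than a proof of it; the remaining implications ($(2)\Rightarrow(3)$ via functoriality of $\lambda_E$ and Miyaoka on curves, and the deduction of $(2)$ from $(3)$ by testing $\lambda_E$ on fibral curves and on normalizations of horizontal curves) are standard and correct as sketched.

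There is, however, one step that fails as written. In $(3)\Rightarrow(1)$ you propose to ``reduce by Hard Lefschetz to checking $c_2(\End(E))\cdot H^{n-2}=0$.'' Hard Lefschetz gives an isomorphism $H^4(X,\mathbb{Q})\to H^{2n-4}(X,\mathbb{Q})$ by cupping with $H^{n-4}$; the single number $c_2(\End(E))\cdot H^{n-2}\in H^{2n}(X,\mathbb{Q})\cong\mathbb{Q}$ is only one linear functional on $H^4$ and cannot detect vanishing of the class. So restricting to a general complete intersection surface and invoking the surface case only yields the numerical statement $c_2(\End(E))\cdot H^{n-2}=0$, not $c_2(\End(E))=0$ in $H^4(X,\mathbb{Q})$ as the theorem asserts. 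The upgrade from the numerical to the cohomological vanishing is itself nontrivial: in \cite{N99} and \cite{B-B} it comes from the Donaldson--Uhlenbeck--Yau/Simpson theory, under which a semistable bundle with vanishing numerical discriminant is built from projectively flat stable bundles, whose discriminant classes vanish identically. You should either route this step through that machinery or cite it alongside the restriction statement you already defer to \cite{N99}.
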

\rm Since nefness of a line bundle does not depend on the fixed ample line bundle $H$ on $X$, the Theorem \ref{thm3.1} implies that the semistability of a vector bundle $E$ with $c_2\bigl(\End(E)\bigr) = 0$ is independent of the fixed ample bundle $H$. We have the following lemma as easy application of Theorem \ref{thm3.1}.
\begin{lem}\label{lem2.2}
 Let $\psi : X \longrightarrow Y$ be a morphism between two smooth complex projective varieties and $E$ is a semistable bundle on $Y$ with $c_2\bigl(\End(E)\bigr) = 0$. Then the pullback bundle $\psi^*(E)$ is also semistable with $c_2\bigl(\End(\psi^*E)\bigr) = 0$, and for any positive integer $m$ and any line bundle $\mathcal{L}$ on $X$, $\Sym^m(E)\otimes \mathcal{L}$ is also semistable bundle on $X$ with $c_2\bigl(\End(\Sym^m(E)\otimes \mathcal{L})\bigr) = 0$.
\end{lem}
\section{pseudoeffective cone of projective bundles}
\begin{thm}\label{thm4.1}
 Let $E$ be a semistable vector bundle of rank $r$ on a smooth complex irreducible projective  variety $X$ such that $c_2\bigl(\End(E)\bigr) = 2rc_2(E)-(r-1)c^2_1(E) = 0 \in H^4(X,\mathbb{Q})$. Then 
 $$\overline{\Eff}^1\bigl(\mathbb{P}(E)\bigr) = \Bigl\{x\lambda_E + \pi^*\gamma \mid x\in \mathbb{R}_{\geq 0}, \gamma\in \overline{\Eff}^1(X)\Bigr\}.$$
 
 In particular, if $\overline{\Eff}^1(X)$ is a finite polyhedron generated by $\{E_1,E_2,\cdots,E_n\}$,
 then $\overline{\Eff}^1\bigl(\mathbb{P}(E)\bigr)$ is also a finite polyhedron and $$\overline{\Eff}^1\bigl(\mathbb{P}(E)\bigr) = \Bigl\{x_0\lambda_E + \sum\limits_{i=1}^nx_i\pi^*(E_i) \mid x_0,x_1,\cdots,x_n \in \mathbb{R}_{\geq 0}\Bigr\}.$$
\end{thm}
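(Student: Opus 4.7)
The plan is to establish the two inclusions separately. The inclusion ``$\supseteq$'' is immediate: by Theorem~\ref{thm3.1}, the joint hypothesis of semistability of $E$ and $c_2(\End(E)) = 0$ forces $\lambda_E \in \Nef^1(\mathbb{P}(E)) \subseteq \overline{\Eff}^1(\mathbb{P}(E))$; pullbacks of pseudoeffective classes are pseudoeffective, so $\pi^*\overline{\Eff}^1(X) \subseteq \overline{\Eff}^1(\mathbb{P}(E))$; and the pseudoeffective cone is convex. For the reverse inclusion, take $D \in \overline{\Eff}^1(\mathbb{P}(E))$ and use the decomposition $N^1(\mathbb{P}(E))_{\mathbb{R}} = \mathbb{R}\,\xi \oplus \pi^*N^1(X)_{\mathbb{R}}$ together with $\xi = \lambda_E + \tfrac{1}{r}\pi^*c_1(E)$ to write $D \equiv a\lambda_E + \pi^*\gamma$ uniquely, with $a \in \mathbb{R}$ and $\gamma \in N^1(X)_{\mathbb{R}}$. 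It suffices to show $a \geq 0$ and $\gamma \in \overline{\Eff}^1(X)$.

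For $a \geq 0$: any line $\ell$ in a fibre $\pi^{-1}(x) \cong \mathbb{P}^{r-1}$ is movable (such lines cover $\mathbb{P}(E)$ as $x$ varies), so by BDPP duality $D \cdot \ell \geq 0$; since $\pi^*\gamma \cdot \ell = 0$ and $\lambda_E \cdot \ell = \xi \cdot \ell = 1$, this reads $a \geq 0$. For $\gamma \in \overline{\Eff}^1(X)$: since $X$ is a surface, BDPP together with $N^1(X) = N_1(X)$ gives $\overline{\Mov}(X) = \Nef^1(X)$, so it suffices to check $\gamma \cdot \alpha \geq 0$ for every $\alpha \in \Nef^1(X)$. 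Fix such $\alpha$; both $\lambda_E$ and $\pi^*\alpha$ are nef on $\mathbb{P}(E)$, and intersecting the pseudoeffective class $D$ with the $r = \dim\mathbb{P}(E) - 1$ nef classes $\lambda_E, \ldots, \lambda_E, \pi^*\alpha$ produces a pseudoeffective $0$-cycle, so
\[ 0 \;\leq\; D \cdot \lambda_E^{r-1} \cdot \pi^*\alpha \;=\; a\,(\lambda_E^r \cdot \pi^*\alpha) + (\lambda_E^{r-1} \cdot \pi^*\gamma \cdot \pi^*\alpha). \]
By the projection formula this reduces to the push-forwards $\pi_*\lambda_E^r$ and $\pi_*\lambda_E^{r-1}$. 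A short Chern-class computation (using $\pi_*\xi^{r-1} = [X]$, $\pi_*\xi^r = c_1(E)$, and the vanishing of $\pi_*\xi^j$ for $j<r-1$, or equivalently exploiting the twist-invariance $\lambda_{E\otimes L} = \lambda_E$ to reduce to the normalised case $c_1(E) = 0$) yields $\pi_*\lambda_E^{r-1} = [X]$ and $\pi_*\lambda_E^r = 0$. Hence $D \cdot \lambda_E^{r-1} \cdot \pi^*\alpha = \gamma \cdot \alpha$, and the needed inequality follows. The polyhedral statement then reduces to writing $\gamma = \sum x_i E_i$ with $x_i \geq 0$.

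The delicate step is the inequality $D \cdot \lambda_E^{r-1} \cdot \pi^*\alpha \geq 0$, which rests on the standard fact that intersecting a pseudoeffective class with a nef divisor preserves pseudoeffectivity --- equivalently, that the complete intersection $\lambda_E^{r-1}\cdot\pi^*\alpha$ of nef divisor classes lies in $\overline{\Mov}(\mathbb{P}(E))$. This is exactly where the nefness of $\lambda_E$, supplied by Theorem~\ref{thm3.1} from the joint semistability-and-vanishing-discriminant hypothesis, enters in an essential way; without it the proposed movable curve class would no longer pair non-negatively against arbitrary pseudoeffective divisors, and the argument would collapse.
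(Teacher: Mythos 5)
Your proof is correct, but the mechanism behind the key inclusion $\overline{\Eff}^1\bigl(\mathbb{P}(E)\bigr) \subseteq \bigl\{x\lambda_E + \pi^*\gamma\bigr\}$ is genuinely different from the paper's. The paper starts from an honest effective divisor $D$ with $\mathcal{O}(D)\cong \mathcal{O}_{\mathbb{P}(E)}(m)\otimes\pi^*\mathcal{L}$, so that $H^0\bigl(X,\Sym^m(E)\otimes\mathcal{L}\bigr)\neq 0$ and $m\geq 0$; it then restricts the section to a general member of a covering family of curves, uses Lemma \ref{lem2.3} and characterization (3) of Theorem \ref{thm3.1} to see that $\Sym^m(E)\otimes\mathcal{L}$ pulls back to a semistable bundle on the normalization, deduces $c_1\bigl(\Sym^m(E)\otimes\mathcal{L}\bigr)\cdot\gamma\geq 0$ for every movable class $\gamma$ on $X$, and concludes $\frac{m}{r}c_1(E)+c_1(\mathcal{L})\in\overline{\Eff}^1(X)$ by BDPP duality on the base, finally taking closures. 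You instead use characterization (2) (nefness of $\lambda_E$) together with pure intersection theory: the identities $\pi_*\lambda_E^{r-1}=[X]$ and $\pi_*\lambda_E^{r}=0$ (indeed $\lambda_E^{r}=-\frac{1}{2r}\pi^*c_2(\End(E))\cdot\xi^{r-2}$, whose pushforward vanishes since $\pi_*\xi^{r-2}=0$), plus the standard fact that the intersection of $r$ nef divisor classes on the $(r+1)$-fold $\mathbb{P}(E)$ lies in $\overline{\Mov}\bigl(\mathbb{P}(E)\bigr)$ and hence pairs non-negatively with any pseudoeffective divisor. Your route buys two things: it operates directly on pseudoeffective classes, so no closure argument is needed at the end, and it isolates precisely where the hypothesis enters (only through the nefness of $\lambda_E$). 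Its one surface-specific ingredient is the identification $\overline{\Mov}(X)=\Nef^1(X)$, which lets you test $\gamma$ against nef classes only; the paper's section-theoretic argument tests $c_1$ against arbitrary movable curve classes and is in that sense closer to what one would need over a higher-dimensional base. Both arguments rest on the same two pillars (Theorem \ref{thm3.1} and BDPP duality), and your treatment of $a\geq 0$ and of the polyhedral refinement agrees with the paper's.
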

\begin{proof}
Let $D$ be an effective divisor on $\mathbb{P}(E)$ such that 
$\mathcal{O}_{\mathbb{P}(E)}(D) \simeq \mathcal{O}_{\mathbb{P}(E)}(m) \otimes \pi^*(\mathcal{L})$ for some integer $m$ and a line bundle $\mathcal{L} \in \Pic(X)$. Then 
 \begin{align*}
  H^0\bigl(\mathbb{P}(E), \mathcal{O}_{\mathbb{P}(E)}(m) \otimes \pi^*\mathcal{L}\bigr) = H^0\bigl(X, \Sym^m(E) \otimes \mathcal{L}\bigr) \neq 0 ,
 \end{align*}
 which implies $m \geq 0$. Let $\gamma = [C] $ be a movable class in $N_1(X)_{\mathbb{R}}$. Then $C$ belongs to an algebraic family of curves
$\{C_t\}_{t\in T}$ such that  $\bigcup_{t \in T} C_t$ covers a dense subset of $X$. So we can find a curve
$C_{t_1}$ in this family such that  
\begin{align*}
 H^0\bigl( C_{t_1}, \Sym^m(E)\vert_{C_{t_1}} \otimes \mathcal{L}\vert_{C_{t_1}}\bigr) \neq 0.
\end{align*}
Let $\eta_{t_1} :\tilde{C_{t_1}} \longrightarrow C_{t_1}$ be the normalization of the curve $C_{t_1}$ and we call $\phi_{t_1} :=  i\circ \eta_{t_1} $ where $i : C_{t_1} \hookrightarrow X$ is the inclusion.
As $E$ is a semistable bundle on $X$ with $c_2\bigl(\End(E)\bigr) = 0$, using Theorem \ref{thm3.1} and Lemma \ref{lem2.2} we have  $\phi_{t_1}^*\bigl(\Sym^m(E) \otimes \mathcal{L}\bigr)$ is also semistable on $\tilde{C_{t_1}}$. Since $\eta_{t_1}$ is a surjective map, we have
\begin{align*}
 H^0\bigl(\eta_{t_1}^*(\Sym^m(E)\vert_{C_{t_1}} \otimes \mathcal{L}\vert_{C_{t_1}})\bigr)  = H^0\bigl(\phi_{t_1}^*(\Sym^m(E) \otimes \mathcal{L})\bigr)\neq 0.
\end{align*}
Then we have an injection $0\longrightarrow \mathcal{O}_{\tilde{C_{t_1}}} \longrightarrow  \phi_{t_1}^*(\Sym^m(E) \otimes \mathcal{L})$.
Recall that $\phi_{t_1}^*(\Sym^m(E) \otimes \mathcal{L})$ is semistable.
This implies that  $c_1\bigl(\phi_{t_1}^*\bigl(\Sym^m(E) \otimes \mathcal{L}\bigr)\bigr) \geq 0$ , and hence
$$c_1\bigl(\Sym^m(E) \otimes \mathcal{L}\bigr)\cdot C_{t_1} = \bigl\{ c_1\bigl(\Sym^m(E) \otimes \mathcal{L}\bigr) \cdot \gamma \bigr\} \geq 0$$
for a movable class $\gamma \in N_1(X)_{\mathbb{R}}$. Using the duality property of movable cone $\overline{\Mov}_1(X)$ we conclude that
\begin{center}
$ c_1\bigl(\Sym^m(E) \otimes \mathcal{L}\bigr) = c_1\bigl(\pi_*\mathcal{O}_{\mathbb{P}(E)}(D)\bigr) \in \overline{\Eff}^1(X)$.
\end{center}
Now 
 $c_1\bigl(\pi_*\mathcal{O}_{\mathbb{P}(E)}(D)\bigr)$
 = $c_1\bigl(\Sym^m(E)\otimes \mathcal{L}\bigr) = \rk\bigl(\Sym^m(E)\bigr)\bigl\{ \frac{m}{r}c_1(E) + c_1(\mathcal{L}) \bigr\} \in \overline{\Eff}^1(X)$, so that $$\frac{m}{r}c_1(E) + c_1(\mathcal{L}) \in \overline{\Eff}^1(X).$$
Now
$\mathcal{O}_{\mathbb{P}(E)}(m) \otimes \pi^*(\mathcal{L})$
$\equiv m\bigl\{ c_1(\mathcal{O}_{\mathbb{P}(E)}(1)) - \frac{1}{r}\pi^*c_1(E)\bigr\} + \frac{m}{r}\pi^*c_1(E) + \pi^*c_1(\mathcal{L})$

$\equiv m\lambda_E + \pi^*\bigl(\frac{m}{r}c_1(E) + c_1(\mathcal{L})\bigr) \in \overline{\Eff}^1\bigl(\mathbb{P}(E)\bigr)$, as $\lambda_E  \in \Nef^1\bigl(\mathbb{P}(E)\bigr) \subseteq \overline{\Eff}^1\bigl(\mathbb{P}(E)\bigr).$

This shows that 
$$ \overline{\Eff}^1\bigl(\mathbb{P}(E)\bigr) = \Bigl\{x\lambda_E + \pi^*\gamma \mid x\in \mathbb{R}_{\geq 0}, \gamma\in \overline{\Eff}^1(X)\Bigr\}. $$
\end{proof}

Next we illustrate Theorem \ref{thm4.1} through various examples.
\begin{exm}\label{exm4.2}
\rm Let $X$ be a smooth complex projective variety of dimension $n$ with Picard number $\rho(X) = 1$. We fix an ample generator $L_X$ of $N^1(X)_{\mathbb{Z}}\cong \mathbb{Z}$. Let $E$ be a semistable vector bundle of rank $r$ on $X$ with $c_2\bigl(\End(E)\bigr) = 0$. Note that $\overline{\Eff}^1(X)$ is generated by $L_X$, and hence  from Theorem \ref{thm4.1} we conclude
\begin{center}
$\overline{\Eff}^1\bigl(\mathbb{P}(E)\bigr) = \Bigl\{ x_0\lambda_E + x_1(\pi^*L_X) \mid x_0,x_1 \in \mathbb{R}_{\geq 0}\Bigr\}$.
\end{center}
For example, let $$E = \mathcal{L}_1\oplus \mathcal{L}_2 \oplus\cdots\oplus \mathcal{L}_r$$ be a direct sum of line bundles on $X$ such that $\mu_{L_X}(\mathcal{L}_1) = \mu_{L_X}(\mathcal{L}_2) = \cdots = \mu_{L_X}(\mathcal{L}_r)$.  Then $E$ is semistable with $c_2\bigl(\End(E)\bigr) =  0$. Therefore, $$\overline{\Eff}^1\bigl(\mathbb{P}(\mathcal{L}_1\oplus \mathcal{L}_2\oplus\cdots\oplus \mathcal{L}_r)\bigr)= \Bigl\{x_0\lambda_E+x_1(\pi^*L_X)\mid x_0,x_1\in \mathbb{R}_{\geq 0}\Bigr\}.$$
\end{exm}
We now discuss the pseudo-effective cones $\overline{\Eff}^1\bigl(\mathbb{P}_X(E)\bigr)$ of some projective bundles $\mathbb{P}_X(E)$ where the base variety $X$ has Picard number $\rho(X) \geq 2$ or $E$ is not semistable. Note that in these cases $\rho\bigl(\mathbb{P}_X(E)\bigr) =\rho(X)+1 \geq 3$ and hence these cones in three dimensional real vector spaces may not be finite polyhedra type in general. We give an example of a projective bundle $\mathbb{P}_X(E)$ having non-polyhedral pseudo-effective cone.
\begin{exm}\label{exm4.4}
 \rm Let $C$ be a general elliptic curve and $X = C\times C$ be the self product. Then $X$ is an abelian surface and $\overline{\Eff}^1(X)$ is a non-polyhedral cone (see Lemma 1.5.4 in \cite{L1}). Let $p_i : X \longrightarrow C$ be the projection maps. For any semistable vector bundle $F$ of rank $r$ on $C$, the pullback bundle $E:= p_i^*(F)$ is a semistable bundle with $c_2\bigl(\End(E)\bigr) = 0$. Therefore by Theorem \ref{thm4.1} $$\overline{\Eff}^1\bigl(\mathbb{P}(E)\bigr) = \Bigl\{a\lambda_E+\pi^*\gamma \mid \gamma\in \overline{\Eff}^1(X), a\in \mathbb{R}_{\geq 0}\Bigr\}.$$
 In this case $\overline{\Eff}^1\bigl(\mathbb{P}(E)\bigr)$ is a non-polyhedral cone.
\end{exm}
\begin{exm}\label{exm4.3}
 \rm Let $E_1,E_2,E_3,\cdots,E_l$ be finitely many vector bundles on a smooth complex projective curve $C$. Then the pseudo-effective cone $\overline{\Eff}^1\bigl(\mathbb{P}_C(E_1)\times_C \mathbb{P}_C(E_2)\times_C \cdots \times_C \mathbb{P}_C(E_l)\bigr)$ of the fibre product is computed in \cite{M21}.
 \end{exm}
Next we restrict our attention in finding pseudo-effective cone of $\mathbb{P}_X(E)$ over higher dimensional varieties $X$ where $c_2\bigl(\End(E)\bigr) \neq 0$. Our first result in this direction is the following.

 \begin{thm}\label{thm4.5}
 \rm Let $X$ be a smooth complex irreducible projective variety of dimension $d$ and let $E=\mathcal{O}_X\oplus \mathcal{L}$ be a rank 2 completely decomposable vector bundle on $X$ with $\mathcal{L}$ being nef.  We denote the numerical class of $\mathcal{O}_{\mathbb{P}(E)}(1)$ by $\xi$. Consider the following 2 cases :
 \begin{enumerate}
  \item  \bf Case (i) : \rm Let $\mathcal{L}\equiv 0$. Then $E$ is slope semistable with  $c_2\bigl(\End(E)\bigr) = 0$. In this case, by Theorem \ref{thm1.1} we have
  $$\overline{\Eff}^1\bigl(\mathbb{P}(E)\bigr) = \Bigl\{x\xi+\pi^*\gamma \mid x\in \mathbb{R}_{\geq 0}, \gamma \in \overline{\Eff}^1(X)\Bigr\}.$$
\item \bf Case (ii) : \rm  Let $\mathcal{L}$ be not numerically trivial. In this case, $E$ is $H$-unstable for any ample divisor $H$, and
 $$\overline{\Eff}^1\bigl(\mathbb{P}(E)\bigr) = \Bigl\{ x\bigl(\xi-c_1(\pi^*\mathcal{L})\bigr) + \pi^*\gamma  \mid x\in \mathbb{R}_{\geq0}, \gamma \in \overline{\Eff}^1(X)\Bigr\}.$$
 \end{enumerate}
 \end{thm}
 \begin{proof}
 Note that as $\mathcal{L}$ is nef, we have $\mathcal{L}\cdot H^{d-1} \geq 0$ for any ample class $H$ in $N^1(X)_{\mathbb{R}}$.
 We consider the following two cases :
\begin{itemize}
 \item {\bf Case (i) :} \rm  Let $\mathcal{L}\equiv 0$.
 Then $E$ is slope semistable with $c_2\bigl(\End(E)\bigr) = 0$. In this case, $\overline{\Eff}^1\bigl(\mathbb{P}(E)\bigr)$ can be computed using Theorem \ref{thm4.1}, and that is
$$\overline{\Eff}^1\bigl(\mathbb{P}(E)\bigr) =  \Bigl\{x\xi+\pi^*\gamma \mid \gamma \in \overline{\Eff}^1(X), x \in \mathbb{R}_{\geq 0} \Bigr\}.$$
\item {\bf Case (ii) :} \rm Let $\mathcal{L}$ be not numerically trivial and $\mathcal{L}\cdot H^{d-1} > 0$ for any ample class $H$ in $N^1(X)_{\mathbb{R}}$ (and hence for every ample class $H$ in $N^1(X)_{\mathbb{R}}$). In this case, $E$ is $H$-unstable for any ample class $H\in N^1(X)_{\mathbb{R}}$,
and the Harder Narasimhan filtration of $E$ with respect to ample class $H$ is given by
  $$0  \subsetneq \mathcal{L}  \subsetneq E .$$
Let us fix notation for the quotient $Q_1 := E/\mathcal{L} = \mathcal{O}_{X}$.

   Let $a\xi+\pi^*\gamma  \in   \overline{\Eff}^1\bigl(\mathbb{P}(E)\bigr)$ for some $\gamma \in N^1(X)_{\mathbb{R}}$ and for some $a \in \mathbb{R}$. Therefore, for any ample class $h$ in $X$, we get
   $$\bigl(a\xi+\pi^*\gamma\bigr)\underbrace{\cdot (\pi^*h)\cdot(\pi^*h)\cdots(\pi^*h)}_{d-times} = a h^d\geq 0.$$
This implies $a \geq 0$ as $h^d > 0$.
Note that $[\mathbb{P}(Q_1)] = \xi - c_1(\pi^*\mathcal{L}).$

   Now
   \begin{align}\label{seq20}
   a\xi+\pi^*\gamma  = a\bigl(\xi-c_1(\pi^*\mathcal{L})\bigr) + \pi^*\bigl(\gamma+ac_1(\mathcal{L})\bigr).
   \end{align}

   Our claim is that $\gamma+ac_1(\mathcal{L}) \in \overline{\Eff}^1(X).$ For any movable curve class $\alpha \in \overline{\Mov}_1(X)$, we note that $\xi\cdot \pi^*\alpha$ is a movable curve class. This is equivalent to the fact that $D\cdot \xi \cdot \pi^*\alpha \geq 0$ for any irreducible divisor $D$ in $\mathbb{P}(E)$. If $D$ does not dominate $X$, then it is a pullback $D=\pi^*D'$ for some irreducible divisor in $X$. In this case the product is $D\cdot \xi \cdot \pi^*\alpha = \xi\cdot \pi^*(D'\cdot \alpha)$ which is nonnegative, the degree of $D'\cdot \alpha$.
   Now assume that $D$ dominates $X$. Let
   $\sigma : \overline{D}\longrightarrow D$ be a resolution of $D$. Then $\mu=\pi\vert_D\circ \sigma : \overline{D}\longrightarrow X$ is a genericallly finite dominant morphism. We want to show that $\sigma^*\xi\vert_D\cdot \mu^*\alpha \geq 0$. Note that $\mu_*(\sigma^*\xi\vert_D)$ is pseudoeffective in this case. As $\alpha\in \overline{\Mov}_1(X)$, we have by projection formula and by applying \cite{BDPP13}, we conclude  $\sigma^*\xi\vert_D\cdot \mu^*\alpha \geq 0$.
   Thus we have $$(a\xi+\pi^*\gamma) \cdot \xi \cdot \pi^*\alpha \geq 0$$

   This implies  $(\gamma+ac_1(\mathcal{L}))\cdot \alpha \geq 0$.
   Hence by duality property of $\overline{\Eff}^1(X)$ and $\overline{\Mov}_1(X)$, we conclude that $$\gamma+ac_1(\mathcal{L})  \in  \overline{\Eff}^1(X).$$

   Thus from (\ref{seq20}), we get

    $$\overline{\Eff}^1\bigl(\mathbb{P}(E)\bigr) = \Bigl\{ a\bigl(\xi-\pi^*c_1(\mathcal{L})\bigr) + \pi^*\gamma  \mid a\in \mathbb{R}_{\geq0}, \gamma \in \overline{\Eff}^1(X)\Bigr\}.$$
   \end{itemize}
  \end{proof}

\begin{xrem}
 \rm The nefness condition of the line bundle $\mathcal{L}$ in Theorem \ref{thm4.5} is a technical one so that the Harder-Narasimhan filtration of $E$ does not depend on the ample divisor $H$ in Case (ii) in Theorem \ref{thm4.5}. The pseudo-effective cone  $\overline{\Eff}^1\bigl(\mathbb{P}(E)\bigr)$ can still be computed without the assumption of nefness of $\mathcal{L}$. However, the computations in those cases will be complicated in general.
\end{xrem}

\begin{thm}\label{thm4.6}
 Let $X$ be a smooth irreducible complex projective variety of Picard number 1, and $L_X$ be an ample generator of $N^1(X)_{\mathbb{R}}$. Let $E = \mathcal{L}_1\oplus \mathcal{L}_2\oplus \cdots\oplus \mathcal{L}_r$  be a completely decomposable vector bundle of rank $r$ and $l=\max\bigl\{\mathcal{L}_i\cdot L_X^{d-1}\mid 1\leq i\leq r\bigr\}\geq 0$. We denote the numerical class of $\mathcal{O}_{\mathbb{P}(E)}(1)$ by $\xi$. Consider the following two cases :
 \begin{enumerate}
  \item \bf Case (i) \rm: Suppose $\mathcal{L}_1\cdot L_X^{d-1} = \mathcal{L}_2\cdot L_X^{d-1} = \cdots = \mathcal{L}_r\cdot L_X^{d-1}$. Then $E$ is slope $L_X$-semistable with $c_2\bigl(\End(E)\bigr) = 0$, and the pseudo-effective cone $$\overline{\Eff}^1\bigl(\mathbb{P}_X(E)\bigr) = \Bigl\{x_0\bigl(\xi-c_1(\pi^*(E)\bigr)+x_1\pi^*(L_X) \mid x_0,x_1\in \mathbb{R}_{\geq 0}\Bigr\}.$$
 \item \bf Case (ii) \rm: Suppose $E$ is $L_X$-unstable.
 In this case, the pseudo-effective cone $$\overline{\Eff}^1\bigl(\mathbb{P}_X(E)\bigr) = \Bigl\{x_0\bigl(\xi-l\pi^*L_X\bigr)+x_1\pi^*(L_X) \mid x_0,x_1\in \mathbb{R}_{\geq 0}\Bigr\}.$$
\end{enumerate}
\begin{proof}
\it Proof of Case (i) : \rm In this case, $E$ is $L_X$-semistable with $c_2\bigl(\End(E)\bigr) = 0$. Therefore,  using Theorem \ref{thm4.1}, we conclude  $$\overline{\Eff}^1\bigl(\mathbb{P}_X(E)\bigr) = \Bigl\{x_0\bigl(\xi-c_1(\pi^*(E))\bigr)+x_1\pi^*(L_X) \mid x_0,x_1\in \mathbb{R}_{\geq 0}\Bigr\}.$$

 \it Proof of Case (ii) : \rm In this case, $E$ is $L_X$-unstable and let
 \begin{align*}
 0 = E_d \subsetneq E_{d-1} \subsetneq E_{d-2} \subsetneq\cdots\subsetneq E_{1} \subsetneq E_0 = E
\end{align*}
be the Harder-Narasimhan filtration of $E$  with respect to $L_X$.

 We have a rational projection map $$p : \mathbb{P}_X(E)  \dashrightarrow \mathbb{P}_X(E_1)$$ defined outside $\mathbb{P}(Q_1)$. The indeterminacies of this rational map are resolved by blowing-up $\mathbb{P}_X(Q_1)$. By following the ideas of Proposition 2.4 in \cite{Fu} (See Remark 2.5 \cite{Fu}), one can find
a locally free sheaf $\mathcal{F}$ on $Y$ such that $\bigl(\Bl_{\mathbb{P}(\mathcal{Q}_1)}\mathbb{P}(E),Y,\eta\bigr)$ is the projective bundle $\mathbb{P}_Y(\mathcal{F})$ over $Y$ with $\eta$ as the projection map.
Thus we have the following commutative diagram : 
\begin{center}
 \begin{tikzcd} 
\mathbb{P}_Y(\mathcal{F}) = \Bl_{\mathbb{P}(\mathcal{Q}_1)}\mathbb{P}(E) \arrow[r, "\eta"] \arrow[d, "\beta"]
& \mathbb{P}(E_1) = Y \arrow[d,"\rho"]\\
\mathbb{P}(E) \arrow[r, "\pi" ]
& X
\end{tikzcd}
\end{center}
where $\mathcal{F}$ sits in the following exact sequence :
\begin{align}\label{seq1}
0 \longrightarrow \mathcal{O}_{\mathbb{P}(E_1)}(1)\longrightarrow \mathcal{F}\longrightarrow \rho^*(Q_1) \longrightarrow 0.
\end{align}
Moreover, if $\gamma := \mathcal{O}_{\mathbb{P}_Y(\mathcal{F})}(1)$, 
$\xi := \mathcal{O}_{\mathbb{P}(E)}(1)$ and $\xi_1 := \mathcal{O}_{\mathbb{P}(E_1)}(1)$,
then $$\gamma = \beta^*\xi, \hspace{2mm} \eta^*\xi_1 = \beta^*\xi - \tilde{E}$$ where $\tilde{E}$ is numerical class of the exceptional divisor of the map $\beta.$

Note that $\xi\vert_{\mathbb{P}(Q_1)} = \mathcal{O}_{\mathbb{P}(Q_1)}$.
Hence we have
\begin{align*}
 \tilde{E}\cdot \gamma = \tilde{E}\cdot \beta^*\xi = 0.
\end{align*}
Now we have $N^1(Y)_{\mathbb{R}} = \Bigl\{ a\xi_1 + b\rho^*L_X \mid a,b\in \mathbb{R}\Bigr\}$ and $N^1\bigl(\mathbb{P}(E)\bigr)_{\mathbb{R}} = \Bigl\{ a\xi + b\pi^*L_X  \mid a,b\in \mathbb{R}\Bigr\}.$

We define $$\phi_1 : N^1\bigl(\mathbb{P}(E)\bigr)_{\mathbb{R}} \longrightarrow N^1(Y)_{\mathbb{R}}$$ by 
\begin{center}
 $ \phi_1( a\xi + b\pi^*L_X ) = a\xi_1 + b\rho^*L_X$.
\end{center}
This gives an isomorphism between real vector spaces $N^1\bigl(\mathbb{P}(E)\bigr)_{\mathbb{R}}$ and  $N^1(Y)_{\mathbb{R}}$.

Also we define \hspace{27mm} $U_1 : N^1(Y)_{\mathbb{R}} \longrightarrow N^1\bigl(\mathbb{P}(E)\bigr)_{\mathbb{R}}$
\begin{center}
 $U_1(\alpha) = \beta_*\eta^*(\alpha).$
\end{center}
In particular, $U_1(a\xi_1 + b\rho^*L_X) = a\xi + b\pi^*L_X.$ We construct an inverse for $U_1$.

Define $$D_1 : N^1\bigl(\mathbb{P}(E)\bigr)_{\mathbb{R}} \longrightarrow N^1(Y)_{\mathbb{R}}$$
\begin{center}
\hspace{8mm} $ D_1(\alpha) = \eta_*(\gamma\cdot\beta^*\alpha)$.
\end{center}
Note that 
$D_1(\xi) = \eta_*(\gamma\cdot\beta^*\xi) = \eta_*\bigl(\gamma\cdot(\eta^*\xi_1 + E)\bigr) = \eta_*\bigl(\gamma\cdot\eta^*\xi_1 + \gamma\cdot E\bigr) = \eta_*\gamma\cdot \xi_1 = \bigl[Y\bigr]\cdot \xi_1$ (see Proposition 3.1, \cite{F98}).
Similarly, $D_1(\pi^*L_X) = \rho^*L_X\cdot \bigl[Y\bigr]$. This shows that the maps $D_1$ and $\phi_1$ are the same maps. As the map $\gamma$ is nef and $\beta$ is dominant,   the map $D_1$ sends effective divisor in $\mathbb{P}(E)$ to $\overline{\Eff}^1(Y)$. Thus $\overline{\Eff}^1\bigl(\mathbb{P}(E)\bigr) \cong \overline{\Eff}^1\bigl(\mathbb{P}(E_1)\bigr)$. Inductively, we have $\overline{\Eff}^1\bigl(\mathbb{P}(E)\bigr) \cong \overline{\Eff}^1\bigl(\mathbb{P}(E_1)\bigr) \cong \cdots \cong \overline{\Eff}^1\bigl(\mathbb{P}(E_d)\bigr)$. Now the pseudo-effective cone $\overline{\Eff}^1\bigl(\mathbb{P}(E_d)\bigr)$ can be computed using Case (i). This completes the proof.
\end{proof}
\end{thm}
\begin{exm}
\rm
Note that by \cite{M21} we have $\Nef^1(\mathbb{P}_X(E))=\overline{\Eff}^1(\mathbb{P}_X(E))$ if and only if $\Nef^1(X)=\overline{\Eff}^1(X)$ under the assumption that $E$ is slope semistable with $c_2(\End(E))=0$.

Let $T_{\mathbb{P}^2}$ be the tangent bundle on $\mathbb{P}^2$. Let $\pi : \mathbb{P}(T_{\mathbb{P}^2})\longrightarrow \mathbb{P}^2$ be the associated projective bundle having Picard number 2.
Note that $T_{\mathbb{P}^2}$ is a stable rank 2 bundle and $c_2(\End(T_{\mathbb{P}^2}))\neq 0$. Hence $\overline{\Eff}^1(\mathbb{P}_{\mathbb{P}^2}(T_{\mathbb{P}^2})) \neq \Nef^1(\mathbb{P}_{\mathbb{P}^2}(T_{\mathbb{P}^2}))$ and $\overline{\Eff}^1(\mathbb{P}(T_{\mathbb{P}^2}))$
can not be computed using Theorem \ref{thm4.1} in this case. We next compute the pseudo-effective cone $\overline{\Eff}^1(\mathbb{P}_{\mathbb{P}^2}(T_{\mathbb{P}^2}))$ and show that $$\overline{\Eff}^1\bigl(\mathbb{P}(T_{\mathbb{P}^2}(-1))\bigr) = \Nef^1\bigl(\mathbb{P}(T_{\mathbb{P}^2}(-1))\bigr) $$
although $T_{\mathbb{P}^2}$ is stable with $c_2(\End(T_{\mathbb{P}^2})) \neq 0$.

Note that $\mathbb{P}_{\mathbb{P}^2}(T_{\mathbb{P}^2})$ is the universal hypersurface in $\mathbb{P}^2\times \mathbb{P}^{2\vee}$. Consider the two dominant projections to $\mathbb{P}^2$ and $\pi$ be one of them. Let $\pi^*H$ be the pullback of any hyperplane divisor class in $\mathbb{P}^2$ and $\xi-\pi^*H$ be the pullback of any line classes in $\mathbb{P}^2$. Our claim is that $\pi^*H$ and $\xi-\pi^*H$ generate the nef cone of $\mathbb{P}_{\mathbb{P}^2}(T_{\mathbb{P}^2})$.

We  also have the following Euler exact sequence
$$0\longrightarrow \mathcal{O}_{\mathbb{P}^2}(-1)\longrightarrow \mathcal{O}_{\mathbb{P}^2}^{\oplus 3}\longrightarrow T_{\mathbb{P}^2}(-1)\longrightarrow 0.$$

Thus $T_{\mathbb{P}^2}(-1)$ is generated by global sections fewer than $4$ global sections. Hence by [Remark 4.2 (ii), \cite{FL}] we have
$$\overline{\Eff}^1\bigl(\mathbb{P}(T_{\mathbb{P}^2}(-1))\bigr) = \Nef^1\bigl(\mathbb{P}(T_{\mathbb{P}^2}(-1))\bigr) = \Bigl\{y_0(\xi-\pi^*H)+y_1\pi^*H\mid y_0\geq 0, y_1\geq 0\Bigr\}.$$
\end{exm}

\section{Volume function of fiber product of projective bundles}
Recall that if $E$ is a vector bundle on a smooth complex curve $C$ having Harder-Narasimhan filtration
$$0=E_d\subsetneq E_{d-1}\subsetneq \cdots \subsetneq E_1\subsetneq E_0 =E$$
 with sucessive slopes $\mu_i$, then the  probability measure $\nu_E$ on $\mathbb{R}$ defined as follows:
$$\nu_E := \frac{1}{\rk(E)}\sum\limits_{i=0}^{d-1}\{\rk(E_{i})-\rk(E_{i+1})\}\delta_{\mu_i},$$
where $\delta_x$ is the Dirac measure concentrated on $\{x\}$. Also, for any real number $\varepsilon > 0$, let $T_{\varepsilon}$ be the operator on the set of all Borel probability measure on $\mathbb{R}$ which is defined by the following property.

If $\nu$ is a Borel probability measure and if $f$ is a continuous function of compact support on $\mathbb{R}$, then
$$\int\limits_{\mathbb{R}} f(x)T_{\varepsilon}\nu (dx) = \int\limits_{\mathbb{R}} f(\varepsilon x)\nu(dx).$$
For  a line bundle $M$ of degree $a$ on $C$, we then have
$$0=E_{d} \otimes M \subsetneq E_{d-1} \otimes M \subsetneq \cdots \subsetneq E_0 \otimes M = E\otimes M$$ is the Harder-Narasimhan filtration of $E\otimes M$ having succesive slopes $\mu_i+a$. This shows that \large$$\nu_{E\otimes M} = \tau_a\nu_E.$$\normalsize Here $\tau_a$ is defined as
$$\int\limits_{\mathbb{R}} f(x) \tau_a\nu (dx) = \int\limits_{\mathbb{R}} f(x+a) \nu (dx).$$

\baselineskip18pt

\begin{defi}\label{defi5.1}
 A sequence of Borel probability measure $\{\nu_n\}_{n\geq 1}$ is said to converge vaguely to a Borel measure $\nu$ if for any continuous function $f$ on $\mathbb{R}$ of compact support, one has $$\int\limits_{\mathbb{R}} f(x)\nu(dx) =\lim\limits_{n\to\infty}\int\limits_{\mathbb{R}} f(x)\nu_n(dx).$$
\end{defi}
We recall the following theorem from \cite{C}. In what follows, we denote the function field of a smooth irreducible complex projective variety $C$ by $K := k(C)$.
\begin{thm}\label{thm2.1}
 Let $\pi:X\longrightarrow C$ be a projective and flat morphism, and $L$ be an arbitrary line bundle on $X$ such that $L_{K}$ is big. Then 
 
 (1) The sequence of Borel probability measures $\bigl(T_{\frac{1}{n}}\nu_{\pi_*(L^{\otimes n})}\bigr)_{n\geq 1}$ converges vaguely to a Borel measure $\nu^{\pi}_L$.
 
 (2) the following equality holds 
 $$\vol_X(L) = \dim(X) \vol_{X_K}(L_K) \int\limits_{\mathbb{R}} x_+\nu^{\pi}_L(dx),$$
where $x_+ = \max\{x,0\}$.
\end{thm}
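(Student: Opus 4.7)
The plan is to study both parts of the theorem through the Harder--Narasimhan filtration of the direct images $\pi_*(L^{\otimes n})$ on the curve $C$. Flatness of $\pi$ ensures that, for $n$ sufficiently large, these direct images are torsion-free sheaves (hence vector bundles on the smooth curve $C$), and bigness of $L_K$ guarantees their ranks grow like a polynomial in $n$ of degree $\dim X_K$. Thus the finite data encoded in $\nu_{\pi_*L^{\otimes n}}$ — the slopes and relative ranks of the HN graded pieces — furnishes a well-defined probability measure on $\mathbb{R}$, and the statement is that, once the slopes are rescaled by $1/n$, these measures converge vaguely.

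For part (1), the key structural input is the multiplication map
$$\pi_*(L^{\otimes n}) \otimes \pi_*(L^{\otimes m}) \longrightarrow \pi_*(L^{\otimes (n+m)}),$$
which is compatible with the HN filtrations in the sense that tensor products of HN subsheaves on the left factor through corresponding HN subsheaves on the right, and slopes behave subadditively: $\mu_{\min}$ of the image is at least the sum of the $\mu_{\min}$'s on the left. A Fekete-type argument applied to each ``level'' of the filtration then shows that the rescaled slopes $\mu_i(\pi_*L^{\otimes n})/n$ converge as $n \to \infty$ and that the normalized ranks of the HN pieces stabilize. Because $\mu_{\max}/n$ and $\mu_{\min}/n$ are uniformly bounded (by slope inequalities for symmetric/direct image sheaves), the measures $T_{1/n}\nu_{\pi_*L^{\otimes n}}$ are supported in a fixed compact interval, so tightness gives subsequential vague limits, and the subadditivity argument pins the limit down uniquely as $\nu_L^{\pi}$.

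For part (2), I would use the identification $H^0(X, L^{\otimes n}) = H^0(C, \pi_*(L^{\otimes n}))$ and the following cohomology estimate on the curve: for a semistable bundle $V$ of slope $\mu$ and rank $r$ on a smooth projective curve of genus $g$, one has $h^0(V) = r \cdot \max(\mu,0) + O(r)$ uniformly for $\mu$ away from the critical strip $[0, 2g-2]$, by asymptotic Riemann--Roch together with semistability vanishing ($h^0 = 0$ when $\mu < 0$, and $h^1 = 0$ when $\mu > 2g-2$). Summing over the HN graded pieces of $\pi_*(L^{\otimes n})$ and repackaging the sum as an integral against $T_{1/n}\nu_{\pi_*L^{\otimes n}}$ gives
$$h^0(X, L^{\otimes n}) = n \cdot \rk\bigl(\pi_*L^{\otimes n}\bigr) \cdot \int x_+ \, T_{1/n}\nu_{\pi_*L^{\otimes n}}(dx) \; + \; o\bigl(n \cdot \rk \pi_*L^{\otimes n}\bigr).$$
Passing to the limit, combining part (1) with asymptotic Riemann--Roch on the generic fiber (which yields $\rk \pi_*L^{\otimes n} \sim \vol_{X_K}(L_K) \cdot n^{\dim X_K}/(\dim X_K)!$), and dividing by $n^{\dim X}/(\dim X)!$ — using $\dim X = \dim X_K + 1$ — produces the stated volume formula.

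The main obstacle is the convergence claim underlying (1): a priori the number of HN slabs grows with $n$, so one must show that only finitely many distinct limiting slopes arise, with well-defined multiplicities. This requires combining the subadditivity estimates with the uniform bound that the sum of the ranks of HN pieces equals the generic rank of $\pi_*L^{\otimes n}$, whose growth rate is controlled by $\vol_{X_K}(L_K)$. A secondary technical point in (2) is that $x_+$ is not compactly supported, so the integral identity from vague convergence needs the additional input that the measures $T_{1/n}\nu_{\pi_*L^{\otimes n}}$ are all supported in a fixed compact set, which is precisely the uniform boundedness established while proving (1). Once these two points are in place, the passage from the cohomological estimate to the integral formula is a routine limiting argument.
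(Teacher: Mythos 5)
This theorem is not proved in the paper at all: it is recalled verbatim from the reference [C11] (Chen), so there is no internal proof to compare against. Your outline does, in substance, reconstruct the architecture of Chen's argument: identify $\pi_*(L^{\otimes n})$ as vector bundles on the curve whose ranks grow like $\vol_{X_K}(L_K)\,n^{\dim X_K}/(\dim X_K)!$, use the multiplication maps $\pi_*(L^{\otimes n})\otimes\pi_*(L^{\otimes m})\to\pi_*(L^{\otimes(n+m)})$ together with the characteristic-zero identity $\mu_{\min}(E\otimes F)=\mu_{\min}(E)+\mu_{\min}(F)$ to get superadditivity, deduce (1) by a Fekete-type limit, and deduce (2) from $h^0(V)=\rk(V)\int x_+\,\nu_V(dx)+O(\rk V)$ on the curve plus asymptotic Riemann--Roch on the generic fibre. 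The bookkeeping in (2), including the factor $\dim X=\dim X_K+1$ and the need for uniformly compactly supported measures to integrate the non-compactly-supported function $x_+$, is correct.

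The one genuine misstep is in your proposed resolution of the ``main obstacle'' in (1): you say one must show that ``only finitely many distinct limiting slopes arise, with well-defined multiplicities.'' That cannot be the mechanism, because the limit measure $\nu^{\pi}_L$ is in general not atomic --- indeed, in this very paper the limit computed for the fibre product is $\phi_{\underline{s}*}\eta$ with $\eta$ Lebesgue measure on a product of simplexes, which is typically absolutely continuous. Tracking individual slopes $\mu_i(n)/n$ also fails because the number of Harder--Narasimhan pieces grows without bound and there is no natural matching of indices between different $n$. The correct object to which superadditivity and the Fekete argument apply is the whole normalized Harder--Narasimhan polygon of $\pi_*(L^{\otimes n})$, viewed as a concave function on $[0,1]$ after rescaling both axes; one proves uniform convergence of these concave functions (this is Chen's convergence theorem for almost superadditive filtrations on graded linear series), and vague convergence of the measures $T_{1/n}\nu_{\pi_*(L^{\otimes n})}$ is then the statement that the derivatives of these polygons converge in distribution. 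Your fallback remark --- tightness gives subsequential limits and superadditivity pins down the limit --- points in the right direction, but the uniqueness of the limit is exactly the content of that polygon convergence theorem and cannot be dismissed as routine. With that substitution, the rest of your outline goes through.
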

We observe that if $E$ is a semistable bundle on $C$, and  $\pi : \mathbb{P}(E)\longrightarrow C$ is the projection map, then $T_{\frac{1}{n}}\nu_{\pi_*(\mathcal{O}_{\mathbb{P}(E)}(n))}$ converges vaguely to $\nu_E$ as $n \to\infty$. Hence $\nu^{\pi}_{\mathcal{O}_{\mathbb{P}(E)}(1)} = \nu_{E}$ in this case.

Now recall the following commutative fibre product diagram:
\begin{center}
 \begin{tikzcd} 
X =\mathbb{P}(F)\times_C \mathbb{P}(E) = \mathbb{P}(\psi^*E) \arrow[r, "\pi_2"] \arrow[d, "\pi_1"]
& \mathbb{P}(E)  \arrow[d,""]\\
 \mathbb{P}(F) \arrow[r, "\psi" ]
& C
\end{tikzcd}
\end{center}
Recall that $\pi = \psi\circ \pi_1$ and $\mathcal{E} = \psi^*E$.
Hence $\Pic\bigl(\mathbb{P}(\mathcal{E})\bigr) = \mathbb{Z}\cdot\mathcal{O}_{\mathbb{P}(\mathcal{E})}(1) \oplus \pi_1^*\Pic(\mathbb{P}(F))$ and $\Pic\bigl(\mathbb{P}(F)\bigr) = \mathbb{Z}\cdot\mathcal{O}_{\mathbb{P}(F)}(1)\oplus \psi^*\Pic(C).$
Let $L = \mathcal{O}_{\mathbb{P}(\mathcal{E})}(m) \otimes \pi_1^*\mathcal{O}_{\mathbb{P}(F)}(l)\otimes \pi_1^*\psi^*\mathcal{M}$ be a line bundle on $\mathbb{P}(\mathcal{E})$ for some line bundle $\mathcal{M}$ of degree $a$ on $C$ such that $L_{K}$ is big on $X_K$.
Note that $X_K \cong \mathbb{P}_K^{e-1}\times\mathbb{P}_K^{f-1}$ and hence $\Pic(X_K)  \cong \mathbb{Z}\times \mathbb{Z}$. Then $L_K$ corresponds to the line bundle of type $(m,l)\in \mathbb{Z}\times \mathbb{Z}$. As $L_K$ is big, we conclude $m>0,l>0$.
In particular, if $L$ is big, then $L_{K}$ is also big. Let us consider the cone
$S = \Bigl\{[L]\in N^1\bigl(\mathbb{P}(\mathcal{E})\bigr)_{\mathbb{R}} \mid  L\in \Pic\bigl(\mathbb{P}(\mathcal{E})\bigr), L_K$ is big on $X_K \Bigr\}$
$$= \Bigl\{[L]\in N^1\bigl(\mathbb{P}(\mathcal{E})\bigr)_{\mathbb{R}} \mid L = \mathcal{O}_{\mathbb{P}(\mathcal{E})}(m) \otimes \pi_1^*\mathcal{O}_{\mathbb{P}(F)}(l)\otimes \pi^*\mathcal{M}\in \Pic\bigl(\mathbb{P}(\mathcal{E})\bigr), m>0,l>0\Bigr\}.$$
Hence,  we have
\begin{center}
Big$\bigl(\mathbb{P}(\mathcal{E})\bigr) \subsetneq S \subsetneq N^1\bigl(\mathbb{P}(\mathcal{E})\bigr)_{\mathbb{R}}.$
\end{center}
Note that $\nu^{\pi}_{L\otimes \pi^*M} = \tau_a\nu^{\pi}_L$ for any line bundle $L$ and $M$ with $M$ having degree $a$ on $C$.

Thus by Theorem \ref{thm2.1} we have
\begin{align*}
& \vol_X(L)
\\
&= \dim(X) \vol_{X_K}(L_K) \int x_+\nu^{\pi}_L(dx)
\\
& = \dim(X) \vol_{X_K}(L_K) \int x_+ \tau_a \nu^{\pi}_{\mathcal{O}_{\mathbb{P}(\mathcal{E})}(m) \otimes \pi_1^*\mathcal{O}_{\mathbb{P}(F)}(l)} (dx)
\\
&=\dim(X) \vol_{X_K}(L_K) \int (x+a)_+\nu^{\pi}_{\mathcal{O}_{\mathbb{P}(\mathcal{E})}(m) \otimes \pi_1^*\mathcal{O}_{\mathbb{P}(F)}(l)} (dx)
\\
&= \dim(X) \vol_{X_K}(L_K) \lim_{n\to \infty} \int (x+a)_+T_{\frac{1}{n}}\nu_{\pi_*(\mathcal{O}_{\mathbb{P}(\mathcal{E})}(m) \otimes \pi_1^*\mathcal{O}_{\mathbb{P}(F)}(l))^{\otimes n}} (dx)
\\
&=\dim(X) \vol_{X_K}(L_K) \lim_{n\to \infty} \int (x+a)_+T_{\frac{1}{n}}
\nu_{\Sym^{mn}(E)\otimes \Sym^{ln}(F)} (dx)
\end{align*}
\baselineskip=18pt
Next we compute the Borel measure $ \nu_{\Sym^{mn}(E)\otimes \Sym^{ln}(F)}$. This will involve the computation of the Harder-Narasimhan filtration of $\Sym^{mn}(E)\otimes \Sym^{ln}(F)$.
\section{Computation of the volume function}\label{Sec5}
Firstly we determine the Harder-Narasimhan filtration of $E\otimes F$ in terms of the Harder-Narsimhan filtrations of $E$ and $F$. The results in this section are inspired by \cite{C}.
\subsection{Harder-Narasimhan filtration of tensor product}\label{subsec6.1}  Recall that
\begin{align}\label{seq3}
 0=E_d\subsetneq E_{d-1}\subsetneq \cdots \subsetneq E_1\subsetneq E_0 =E
\end{align}
 and 
\begin{align}\label{seq4}
 0=F_h\subsetneq F_{h-1}\subsetneq F_{h-2}\subsetneq\cdots\subsetneq F_{1} \subsetneq F_0 = F
\end{align}
 are the Harder-Narsimhan filtrations for $E$ and $F$ respectively. We fix the notations $Q_i = E_i/E_{i+1}$ for each $i \in \{0,1,2,\cdots,d-1\}$ and $Q'_j = F_j/F_{j+1}$ for each $j\in \{0,1,2,\cdots,h-1\}$.

Let $J_1 = \{0,1,2,\cdots,d-1\}$ and $J_2 = \{ 0,1,2,\cdots,h-1\}$. Consider the set $\Theta = J_1\times J_2$. 
\linebreak We define a partial order on $\Theta$ as follows:
$$ (a,b)\leq (c,d) \iff a\leq c, b\leq d.$$
We say a subset $A\subseteq \Theta$ is saturated if $\alpha \in A, \beta \in \Theta$ and $\beta \geq \alpha \implies \beta \in A$.
For a subset $A\subseteq \Theta$ we define the saturation of $A$,  denoted by $\overline{A}$ as follows:
\begin{center}
$\overline{A} = \bigl\{ \beta \in \Theta \mid \exists \alpha \in A$ such that $\beta \geq \alpha\bigr\}$.
\end{center}
For a nonempty subset $A\subseteq \Theta$ and $\alpha =(a,b) \in A$, we define 
\begin{align}\label{seq5}
\Im_{\alpha} : = E_a\otimes F_b \hspace{2mm} \rm and \hspace{2mm}\Im_{A} := \sum\limits_{\alpha\in A} \Im_{\alpha}.
\end{align}
i.e. $\Im_A$ is the sum of subbundles generated by $\Bigl\{\Im_{\alpha} : \alpha \in A\Bigr\}.$
Write by convention $E_{\emptyset} = 0$.
Then $\Im_{A} = \Im_{\overline{A}}$. Also for any two subsets $A_1 \subseteq A_2\subseteq \Theta$, we have $$\Im_{A_1} \subseteq \Im_{A_2}.$$

For any $\alpha =(a,b)\in \Theta$, we define 
\begin{align}\label{seq6}
\mathcal{Q}_\alpha = Q_a\otimes Q'_b.
\end{align}
Since the ground field $\mathbb{C}$ has charactersitic 0, we have  $\mathcal{Q}_{\alpha}$ is a semistable bundle for each $\alpha \in \Theta$.
\begin{prop}\label{prop3.1}
 Assume that $A\subseteq \Theta$ is non-empty and saturated, and that $A''$ is a subset of $A$ consisting of minimal elements. Then $A':=A\setminus A''$ is also saturated. Furthermore, one has an isomorphism
 \begin{align}\label{isom}
 \Im_A/\Im_{A'} \cong \bigoplus\limits_{\alpha \in A''}\mathcal{Q}_{\alpha}
 \end{align}
 \begin{proof}
 Let $\alpha\in A'$ , $\beta\in \Theta$, and $\beta\geq \alpha$. As $A$ is saturated, we have $\beta\in A$. But $\beta$ is not a minimal element of $A$. Thus $\beta \in A'$. This shows that $A'$ is saturated.

 Let $A=\bigl\{\alpha_1,\alpha_2,\cdots,\alpha_n\bigr\}$ such that
 $A''=\bigl\{\alpha_1,\alpha_2,\cdots,\alpha_m\bigr\}$ and $A'=\bigl\{\alpha_{m+1},\alpha_{m+2},\cdots,\alpha_{n}\bigr\}$.

  Recall that for any $\alpha = (a,b)\in A$, we define $\Im_{\alpha} = E_a\otimes F_b$ and $\mathcal{Q}_{\alpha} = Q_a\otimes Q'_b$. We note that  there is a surjection $$\psi_{\alpha} : \Im_{\alpha} = E_a\otimes F_b  \longrightarrow Q_a\otimes F_b\longrightarrow \mathcal{Q}_{\alpha} = Q_a\otimes Q'_b\longrightarrow 0.$$
 Define $$\Psi : \Im_{A} =\sum\limits_{\alpha \in A} \Im_{\alpha} \longrightarrow \bigoplus\limits_{\alpha\in A''}\mathcal{Q}_{\alpha}$$
 as follows :
 $$\Psi(\sum\limits_{j=1}^nx_j) = \bigl(\psi_{\alpha_1}(x_1),\psi_{\alpha_2}(x_2),\cdots,\psi_{\alpha_m}(x_m)\bigr),$$
 where $x_j\in \Im_{\alpha_j}$ for each $j\in \{1,2,\cdots,n\}$. We fix the notation $\alpha_j = (a_j,b_j)$ for $j$ satisfying $1\leq j \leq n$. Note that if $\alpha \leq \beta$ for some $\alpha,\beta\in A$, then we have $\Im_{\beta}\hookrightarrow \Im_{\alpha}$. Moreover, if $\alpha \leq \beta$ for some $\alpha,\beta\in A$ and $\alpha\neq \beta$, then $\Im_{\beta} \subseteq \ker(\psi_{\alpha})$.

 First we claim the following : if $\sum\limits_{j=1}^nx_j = 0\in \Im_A$, then $$\Psi(\sum\limits_{j=1}^nx_j) = \bigl(\psi_{\alpha_1}(x_1),\psi_{\alpha_2}(x_2),\cdots,\psi_{\alpha_m}(x_m)\bigr) = (\underbrace{0,0,0,\cdots,0}_{m-times}) \in \bigoplus\limits_{\alpha\in A''}\mathcal{Q}_{\alpha}.$$

  Now suppose $\psi_{\alpha_i}(x_i) \neq 0$ for some $\alpha_i$ with $1\leq i \leq m$. Consider the set
 \begin{center}
  $T:=\Bigl\{ l \mid 1\leq l \leq n, (a_l,b_l) \geq (a_i,b_i)$ and $(a_l,b_l) \neq (a_i,b_i)\Bigr\}.$
  \end{center}

 We then have $$0=\sum\limits_{j=1}^n x_j = x_i + \sum\limits_{l\in T}x_l+\sum\limits_{l\notin T, l\neq i}x_l$$
Note that for any $l\in T$, we have $\Im_{(a_l,b_l)}\subseteq \ker(\psi_{\alpha_i})$. Thus $\sum\limits_{l\in T}x_l \in \sum\limits_{l\in T}\Im_{(a_l,b_l)} \subseteq \ker(\psi_{\alpha_i}).$

Therefore $$x_i+\sum\limits_{l\notin T, l\neq i}x_l \in \ker(\psi_{\alpha_i}) \subseteq \Im_{\alpha_i}.$$
We will now show that $\sum\limits_{l\notin T, l\neq i}x_l \in \ker(\psi_{\alpha_i})$ and this will contradict the fact that $x_i \notin \ker(\psi_{\alpha_i}).$

Note that $$\sum\limits_{l\notin T, l\neq i}x_l \in \Im_{(a_i,b_i)}.$$
Also $$\sum\limits_{l\notin T, l\neq i}x_l  \in \sum\limits_{l\in T,l\neq i}\Im_{(a_l,b_l)} \subseteq \Im_{(s,t)},$$
where $s=\min\{a_l \mid 1\leq l \leq n, l\notin T, l\neq i\}$ and $t= \min\{b_l \mid 1\leq l \leq n, l\notin T, l \neq i\}.$ Let $s=a_p$ for some $p\notin T$ and $p\neq i$, and $t=b_q$ for some $q\notin T$ and $q\neq i$.

 Then we observe that
 $$\sum\limits_{l\notin T, l\neq i}x_l \in \Im_{(a_i,b_i)} \cap \Im_{(a_p,b_q)} = \Im_{(\max\{a_p,a_i\},\max\{b_q,b_i\})}.$$

 Next we show that
 $(\max\{a_p,a_i\},\max\{b_q,b_i\}) \neq (a_i,b_i)$. If not, then $\max\{a_p,a_i\} = a_i$ and $\max\{b_q,b_i\}= b_i$. This implies $a_i\geq a_p$ and $b_i \geq b_q$. Since $p\notin T$ and $p\neq i$, we have either $a_p< a_i$ or $b_p<b_i$. Consider the following two cases :
 \begin{enumerate}
  \item Case 1 : If $a_p < a_i$, then we arrive at a contradiction as $a_i\geq a_p.$
  \item Case 2 : If $b_p < b_i$,  then  by definition of $b_q$ we have
  $b_q \leq b_p < b_i$, and this also gives a contradiction.
 \end{enumerate}


Therefore $(\max\{a_p,a_i\},\max\{b_q,b_i\}) \geq (a_i,b_i)$ and $(\max\{a_p,a_i\},\max\{b_q,b_i\}) \neq (a_i,b_i).$

Thus we conclude $$\sum\limits_{l\notin T,l\neq i}x_l \in \Im_{(\max\{a_p,a_i\},\max\{b_q,b_i\})} \subseteq \ker(\psi_{\alpha_i}).$$
This completes the proof of our claim.

 Now we show that $\Psi$ is well-defined. Let $\sum\limits_{j=1}^nx_j$ and $ \sum\limits_{j=1}^nx'_j \in \Im_A$ such that $\sum\limits_{j=1}^nx_j= \sum\limits_{j=1}^nx'_j$. Then $$\sum\limits_{j=1}^n\bigl(x_j-x'_j) = \underbrace{0+0+0+\cdots+0}_{n-times}   \in \Im_A.$$ Hence by the previous observation, we get $\psi_{\alpha_i}(x_j-x'_j) = 0$ for each $i$ and $j$ with $1\leq i,j \leq m$. As each $\psi_{\alpha}$ is a homomorphism, we have $\psi_{\alpha_i}(x_j) = \psi_{\alpha_i}(x'_j)$ for each $i$ and $j$ satisfying $1\leq i,j \leq m$. Therefore we conclude $\Psi\bigl(\sum\limits_{j=1}^nx_j\bigr) = \Psi\bigl(\sum\limits_{j=1}^nx'_j \bigr).$ This proves the well-definedness of the map $\Psi$.

 Clearly, we have $\Im_{A'} \subseteq $ ker$(\Psi)$. Also, the map $\Psi$ is surjective as each $\psi_{\alpha}$ is surjective for any $\alpha \in A^{''}$.
Next we will show that ker$(\Psi) \subseteq \Im_{A'}$.




  Let $\alpha = (a,b) \in A''$ be a minimal element and $x\in$ Ker$(\psi_{\alpha})$. For any minimal element $\alpha = (a,b) \in A^{''}$, we have the following commutative diagram of short exact sequences:

\[
\begin{tikzcd}
  & 0 \arrow[d] & 0 \arrow[d] & 0 \arrow[d]& \\
  0 \arrow[r] & E_{a+1}\otimes F_{b+1} \arrow[d, "\eta_1"] \arrow[r, "\theta_1"] & E_{a}\otimes F_{b+1} \arrow[d, "\beta_1"] \arrow[r, "\theta_2"] & Q_{a}\otimes F_{b+1}\arrow[d, "\gamma_1"] \arrow[r] & 0 \\
  0 \arrow[r] & E_{a+1}\otimes F_{b} \arrow[r, "\theta_3"]\arrow[d, "\eta_2"] & E_{a}\otimes F_{b} \arrow[r, "\theta_4"]\arrow[d,"\beta_2"] & Q_{a}\otimes F_{b} \arrow[d, "\gamma_2"] \ar[r] & 0\\
  0 \arrow[r] & E_{a+1}\otimes Q'_{b} \arrow[r, "\theta_5"] \arrow[d] & E_{a}\otimes Q'_{b} \arrow[r, "\theta_6"] \arrow[d] & Q_{a}\otimes Q_{b'} \ar[r] \arrow[d] & 0\\
  & 0 & 0 & 0 &
\end{tikzcd}
\]

Note that the map $\psi_{\alpha} = \gamma_2\circ \theta_4$. Let $x\in $ Ker$(\psi_{\alpha})$. Then $(\gamma_2\circ \theta_4)(x) = \gamma_2(\theta_4(x))=0$.
So there exists $y\in Q_a\otimes F_{b+1}$ such that $\gamma_1(y) = \theta_4(x)$. Now there exists  $z \in E_a\otimes F_{b+1}$ such that $\theta_2(z) = y$. This implies $\gamma_1(\theta_2(z))= \theta_4(x)$. By the commutativity of the diagram, we have $\gamma_1(\theta_2(z)) = \theta_4(\beta_1(z)) = \theta_4(x)$. This implies $x- \beta_1(z)  \in $ Ker$(\theta_4)$. Thus there exists $t\in E_{a+1}\otimes F_b$ such that $x- \beta_1(z) = \theta_3(t)$. In other words, $x=\beta_1(z)+\theta_3(t)$. This shows that Ker$(\psi_{\alpha}) \subseteq E_{a+1}\otimes F_b + E_{a}\otimes F_{b+1} \subseteq \Im_{A'}$.

Let $\sum\limits_{i=1}^n x_i \in $ Ker$(\Psi)$. Thus $\psi_{\alpha_i}(x_i) = 0$ for every $i\in\{1,2,\cdots,m\}$, i.e. $x_i\in $ Ker$(\psi_{\alpha_i}).$ Hence by the previous observation  we have $\sum\limits_{i=1}^nx_i\in \Im_{A'}$.
 Therefore we conclude that Ker$(\Psi) \subseteq \Im_{A'}$.

We then have the required isomorphism :
 \begin{align*}
  \Psi: \Im_A/\Im_{A'} \cong \bigoplus\limits_{\alpha\in A''}\mathcal{Q}_{\alpha}
 \end{align*}

 \end{proof}
\end{prop}
For any $\alpha =(a,b) \in \Theta$, let $\mu_{\alpha} =\mu(Q_a)+\mu(Q'_b)$. With this notation, $\mu_{\alpha}$ is the slope of $\mathcal{Q}_{\alpha}$, where $\mathcal{Q}_{\alpha}$ is defined as in (\ref{seq6}). Denote by $\Sigma$  the set of all real numbers of the form $\mu_{\alpha}$, where $\alpha$ ranges through vectors in $\Theta$. Note that, from the two Harder-Narsimhan filtrations (\ref{seq3}) and (\ref{seq4}), we have $\alpha \leq \beta $ implies $\mu_{\alpha} \leq \mu_{\beta}$, and $\alpha \lneqq \beta $ implies $\mu_{\alpha} \lneqq \mu_{\beta}$.
\begin{thm}\label{thm3.2}
 The set $\Sigma$ identifies with that of succesive slopes of $E\otimes F$. Furthermore, suppose that the elements in $\Sigma$ are ordered as 
 $$w_0<w_1<w_2<\cdots<w_{t-1},$$
then 
\begin{align}\label{seq7}
0=\Im_{B_t} \subsetneq \Im_{B_{t-1}} \subsetneq \cdots \subsetneq\Im_{B_1}
\subsetneq \Im_{B_0}
\end{align}
is the Harder-Narasimhan filtration of $E\otimes F$, where $$B_j =\Bigl\{\alpha \in \Theta \mid \mu_{\alpha}\geq w_j\Bigr\} \subseteq \Theta$$ for each $j \in \{ 0,1,\cdots,t-1\}$ and $B_t =\emptyset$ by convention.
\begin{proof}
 We have $B_j\setminus B_{j+1} = \{\alpha \in \Theta \mid \mu_{\alpha} = w_j\}$. Moreover if $\mu_{\alpha}=w_j$, then $\alpha$ is a minimal element of $B_j$. Therefore by the previous Proposition \ref{prop3.1} we get $$\Im_{B_j}/ \Im_{B_{j+1}} \cong \bigoplus\limits_{\alpha\in B_j\setminus B_{j+1}} \mathcal{Q}_{\alpha}$$
is semistable of slope $w_j$.
\end{proof}
\end{thm}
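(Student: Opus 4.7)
My plan is to check that the chain \eqref{seq7} qualifies as a Harder--Narasimhan filtration by confirming three items: each $A_j$ is saturated, the successive quotients $\Im_{A_j}/\Im_{A_{j+1}}$ are semistable with the prescribed slopes $v_j$, and the slope sequence is strictly monotone; uniqueness of the HN filtration then forces the identification. Saturation follows directly from the monotonicity $\alpha \leq \beta \Rightarrow \mu_\alpha \leq \mu_\beta$ recorded just before the theorem: if $\mu_\alpha \geq v_j$ and $\beta \geq \alpha$ in $\Theta$, then $\mu_\beta \geq v_j$, so $\beta \in A_j$. For the endpoints, $A_m = \emptyset$ forces $\Im_{A_m} = 0$, while $(0,0) \in A_0 = \Theta$ yields $\Im_{A_0} \supseteq E_0 \otimes F_0 = E \otimes F$, hence equality.

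The central step is to apply Proposition \ref{prop3.1} iteratively with $A = A_j$ and $A'' = A_j \setminus A_{j+1}$. Since $v_j$ and $v_{j+1}$ are consecutive elements of $\Sigma$, one has $A_j \setminus A_{j+1} = \{\alpha \in \Theta : \mu_\alpha = v_j\}$. I would verify that every such $\alpha$ satisfies the maximality hypothesis of Proposition \ref{prop3.1}: using the strict monotonicity $\gamma \lneqq \alpha \Rightarrow \mu_\gamma \lneqq \mu_\alpha$ (also recorded before the theorem), any $\gamma \in \Theta$ strictly below $\alpha$ in the partial order has $\mu_\gamma < v_j$, hence $\gamma \notin A_j$; thus $\Im_\alpha = E_a \otimes F_b$ is not properly contained in any $\Im_\beta$ with $\beta \in A_j$, confirming that $\alpha$ is maximal in the subsheaf order on $A_j$ needed by Proposition \ref{prop3.1}. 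The residual set $A_{j+1}$ is again saturated by the same monotonicity argument. The proposition then yields
\begin{align*}
\Im_{A_j}/\Im_{A_{j+1}} \;\cong\; \bigoplus_{\alpha \in A_j \setminus A_{j+1}} \mathcal{Q}_\alpha.
\end{align*}
Each $\mathcal{Q}_\alpha = Q_a \otimes Q'_b$ is a tensor product of semistable bundles on $C$, and since we are in characteristic zero it is itself semistable of slope $\mu(Q_a) + \mu(Q'_b) = v_j$. Because a direct sum of semistable bundles sharing a common slope is semistable of that slope, $\Im_{A_j}/\Im_{A_{j+1}}$ is semistable of slope $v_j$.

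Finally, the slopes satisfy $v_0 < v_1 < \cdots < v_{m-1}$ by construction, precisely matching the HN convention adopted in the paper. By uniqueness of the Harder--Narasimhan filtration, \eqref{seq7} must coincide with the HN filtration of $E \otimes F$, and $\Sigma$ is identified with the multiset of its successive slopes. I expect the most delicate point to be the maximality verification in the middle paragraph: one has to carefully match the combinatorial order on $\Theta$ with the inclusion order on the subsheaves $\Im_\alpha$ so that Proposition \ref{prop3.1} really applies to the blocks $A_j \setminus A_{j+1}$, and the \emph{strict} monotonicity of $\alpha \mapsto \mu_\alpha$ is exactly what makes this alignment work.
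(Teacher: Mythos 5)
Your proof is correct and follows the same route as the paper: apply Proposition \ref{prop3.1} to each block $A_j \setminus A_{j+1} = \{\alpha : \mu_\alpha = v_j\}$, note that each $\mathcal{Q}_\alpha$ is semistable of slope $v_j$ in characteristic zero, and invoke uniqueness of the Harder--Narasimhan filtration. The order-matching you flag at the end is indeed the delicate point, and you resolve it the right way: under the partial order as literally defined on $\Theta$, the elements with $\mu_\alpha = v_j$ are the \emph{minimal} elements of $A_j$ (which is exactly what makes $A_{j+1}$ remain saturated, and which corresponds to $\Im_\alpha$ being maximal under subsheaf inclusion), whereas the paper's proof calls them ``maximal''; your reading is the one under which Proposition \ref{prop3.1} actually applies.
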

 \begin{xrem}
  \rm In the above Theorem \ref{thm3.2}, the rank of the succesive semistable quotients are
  $$\mathcal{R}_j := \rk\bigl(\Im_{B_j}/ \Im_{B_{j+1}}\bigr) = \sum\limits_{\alpha \in B_j\setminus B_{j+1}} \rk\bigl(\mathcal{Q}_{\alpha}\bigr) = \sum\limits_{\alpha = (a,b) \in B_j\setminus B_{j+1}} \rk(Q_a)\cdot \rk(Q'_b).$$
 \end{xrem}

\subsection{Harder Narasimhan Filtration for $\Sym^{mn}(E)$ :}
Recall that  
\begin{align}\label{seq8}
 0=E_d\subsetneq E_{d-1}\subsetneq E_{d-2}\subsetneq\cdots\subsetneq E_{1} \subsetneq E_0 = E
\end{align}
is the Harder-Narasimhan filtration of $E$, and $Q_i=E_i/E_{i+1}$ for each $i$. Let $\mu_i = \mu(Q_i)$ so that we have $$\mu_{d-1} > \mu_{d-1} > \cdots >\mu_1 > \mu_0.$$
Let $\Phi = \{0,1,\cdots,d-1\}^{mn}$. We define a partial order $\leq$ on $\Phi$ as follows:

For $\alpha = (a_1,a_2,\cdots,a_{mn})$ and $\beta = (b_1,b_2,\cdots,b_{mn}) \in \Phi$, we define $$\alpha \leq \beta  \iff a_i\leq b_i \hspace{2mm} \text{for}\hspace{1mm} \text{every} \hspace{1mm} i =1,2,\cdots,mn.$$
For $\alpha = (a_i)_{i=1}^{mn}\in \Phi$, we define $E_{\alpha} = \bigotimes\limits_{i=1}^{mn}E_{a_i}$. For a  subset $A\subseteq \Phi$, we define $E_A : = \sum\limits_{\alpha\in A}E_{\alpha}.$

Partitions of an integer are unordered, but the $a_0,\cdots,a_{d-1}$ are
ordered (meaning their order matters). The ordered ones are called
\it compositions. \rm

The symmetric group $S^{mn}$ acts on $\Phi$. Let $\tilde{\Phi} := \Phi/S^{mn}$.  Note that the elements in $\tilde{\Phi}$ are in bijective correspondence with the set of compositions of $mn$ into sum of $d$ non-negative integers $(a_0,a_1,\cdots,a_{d-1})$, i.e. $\sum\limits_{i=0}^{d-1}a_i = mn$. Any element $[\alpha]$ in $\tilde{\Phi}$ will be denoted by $\underline{a} = (a_0,a_1,\cdots,a_{d-1})$ under this correspondence. For such  a class $[\alpha]$, we use the symbol $\tilde{Q}_{[\alpha]}(E)$ to denote the tensor product $\bigotimes\limits_{i=0}^{d-1}\Sym^{a_i}Q_i$.

Let $A\subseteq \Phi$ be such that $A$ is invariant under the action of $S^{mn}$. Then we have a map $$\theta : E_A \longrightarrow \Sym^{mn}E.$$ 
Define $S_AE:=$ Im$(\theta)$. This gives $E_A \rightarrow \hspace{-8pt} \rightarrow S_AE$ and hence $E_A/S^{mn} \cong S_AE$.

For $\alpha = (a_i)_{i=1}^{mn}\in \Phi$, we define $\mu_{\alpha} : = \sum\limits_{i=1}^{mn} \mu_{a_i}$. Let $\Sigma_{mn}$ be the set of all $\mu_{\alpha}$ for $\alpha \in \Phi$. We order the elements in $\Sigma_{mn}$ as follows:
$$v_0<v_1<\cdots<v_{k-1}.$$

Let $A_j = \{ \alpha \in \Phi \mid \mu_\alpha \geq v_j\}$ for $j\in\{0,1,\cdots,k-1\}$. We further have by assumption that $A_{k} =\emptyset$.

Then by [Proposition 3.2, \cite{C}] the following filtration
\begin{align}\label{seq9}
 0 = S_{A_k}E \subsetneq S_{A_{k-1}}E \subsetneq \cdots \subsetneq S_{A_1}E \subsetneq S_{A_0}E=S_AE
\end{align}
 is the Harder-Narasimhan filtration of $\Sym^{mn}(E)$ such that $$S_{A_j}E/S_{A_{j+1}}E \cong \bigoplus\limits_{[\alpha] \in (A_j\setminus A_{j+1})/S^{mn}} \tilde{Q}_{[\alpha]}(E)$$ are semistable bundles of slope $v_j$ for $j\in\{0,1,\cdots,k-1\}$.
 
 We also have the expression for the rank $R_j$ of each of the succesive quotients $S_{A_j}E/S_{A_{j+1}}E$ in (\ref{seq9}).
Take an arbitrary class $[\alpha]$ in $\tilde{\Phi}$ which correspondence to the composition $\underline{a} = (a_i)_{i=0}^{d-1}$ of $mn$. Then we have value $\mu_{\alpha} = \sum\limits_{i=0}^{d-1}a_i\mu_i$ where $\mu_{i} = \mu(Q_i)$ as in (\ref{seq8}). For each $i \in \{0,1,\cdots,d-1\}$, let $r_i$ be the rank of $Q_i$. Then the rank of $\tilde{Q}_{[\alpha]}(E)$ is given as follows :

$$r_{\underline{a}}:= \rk\bigl(\tilde{Q}_{[\alpha]}(E)\bigr) = \rk\Bigl( \bigotimes\limits_{i=0}^{d-1}\Sym^{a_i}Q_i\Bigr) = \prod\limits_{i=0}^{d-1}\binom{a_i+r_i-1}{{r_i}-1}.$$
Thus we obtain
\begin{align}\label{rk1}
R_j =\rk(S_{A_j}E/S_{A_{j+1}}E) \hspace{3mm} = \sum\limits_{\substack{\underline{a} =(a_i)_{i=0}^{d-1}\in \mathbb{N}^d,\\ a_0+a_1+\cdots+a_{d-1}=mn, \\ a_0\mu_0+a_1\mu_1+\cdots+a_{d-1}\mu_{d-1} = v_a}} r_{\underline{a}} \hspace{3mm} = \sum\limits_{\substack{\underline{a} =(a_i)_{i=0}^{d-1}\in \mathbb{N}^d,\\ a_0+a_1+\cdots+a_{d-1}=mn, \\ a_0\mu_0+a_1\mu_1+\cdots+a_{d-1}\mu_{d-1} = v_a}} \prod\limits_{i=0}^{d-1}\binom{a_i+r_i-1}{{r_i}-1}
\end{align}
\subsection{Harder Narasimhan Filtration for $\Sym^{ln}(F)$ :}
Recall that 
\begin{align}\label{seq10}
 0=F_h\subsetneq F_{h-1}\subsetneq F_{h-2}\subsetneq\cdots\subsetneq F_{1} \subsetneq F_0 = F
\end{align}
is the Harder-Narasimhan filtration of $F$, and $Q'_j=F_j/F_{j+1}$ for each $j$ with rank $\rk(Q'_j) =r'_j$. Let $\mu'_j = \mu(Q_j)$ so that we have $$\mu'_{h-1} > \mu'_{h-2} > \cdots >\mu'_1 > \mu'_0.$$

Similar steps can be taken  to get the Harder-Narasimhan filtration for $\Sym^{ln}(F)$, and the rank of the succesive slopes of the Harder-Narasimhan filtration of  $\Sym^{ln}(F)$.

Let $\Phi' = \{0,1,2,\cdots,h-1\}^{ln}$. For $\beta = (b_i)_{i=1}^{ln}\in \Phi'$, we define $F_{\beta} := \bigotimes\limits_{i=1}^{ln}F_{b_i}$, and for a subset $B\subseteq \Phi'$, we define $F_{B} := \sum\limits_{\beta\in B}F_{\beta}$.

Let $\Sigma_{ln} := \bigl\{\mu_{\beta} \mid \beta\in \Phi'\bigr\}$, where $\mu_{\beta} = \sum\limits_{i=1}^{ln} \mu_{b_i}$ if $\beta = (b_i)_{i=1}^{ln}\in \Phi'$. We order the elements in $\Sigma_{ln}$ as follows :
$$v'_0<v'_1<\cdots<v'_{p-1}$$
For $j\in\{0,1,\cdots,p-1\}$, we define $A'_j := \bigl\{\beta \in \Phi'\mid \mu_{\beta} \geq v'_j\bigr\}$ and $A'_p = \emptyset$.
Let
\begin{align}\label{seq11}
0 = S_{A'_p}F \subsetneq S_{A'_{p-1}}F \subsetneq \cdots \subsetneq S_{A'_1}F \subsetneq S_{A'_0}F=S_A'F
\end{align}
be the Harder-Narasimhan filtration of $\Sym^{ln}(F)$ with sucessive semistable quotients $$S_{A'_j}F/S_{A'_{j+1}}F \cong \bigoplus\limits_{[\beta] \in (A'_j\setminus A'_{j+1})/S^{ln}} \tilde{Q}_{[\beta]}(F)$$
having slope $v'_j$ for each $j\in \{0,1,\cdots,p-1\}$.

Then
\begin{align}\label{rk2}
 R'_j =\rk(S_{A'_j}F/S_{A'_{j+1}}F)\hspace{3mm} = \sum\limits_{\substack{\underline{a}' =(a'_i)_{i=0}^{h-1}\in \mathbb{N}^h, \\ a'_0+a'_1+\cdots+a'_{h-1}=ln, \\ a'_0\mu'_0+a'_1\mu'_1+\cdots+a'_{h-1}\mu'_{h-1} = v'_b}} r_{\underline{a}'}  \hspace{3mm} = \sum\limits_{\substack{\underline{a}' =(a'_i)_{i=0}^{h-1}\in \mathbb{N}^h, \\ a'_0+a'_1+\cdots+a'_{h-1}=ln, \\ a'_0\mu'_0+a'_1\mu'_1+\cdots+a'_{h-1}\mu'_{h-1} = v'_b}} \prod\limits_{j=0}^{h-1}\binom{a'_j+r'_j-1}{{r'_j}-1}
 \end{align}
\subsection{Harder Narasimhan Filtration for $\Sym^{mn}(E) \otimes \Sym^{ln}(F)$ :}
We will use the notations and conventions used in  subsection \ref{subsec6.1} from now on to get the Harder-Narsimhan filtration of the tensor product $\Sym^{mn}(E) \otimes \Sym^{ln}(F)$.

Recall that 
\begin{align}\label{seq14}
 0 = S_{A_k}E \subsetneq S_{A_{k-1}}E \subsetneq \cdots \subsetneq S_{A_1}E \subsetneq S_{A_0}E=S_AE
\end{align}
and 
\begin{align}\label{seq13}
0 = S_{A'_p}F \subsetneq S_{A'_{p-1}}F \subsetneq \cdots \subsetneq S_{A'_1}F \subsetneq S_{A'_0}F=S_A'F
\end{align}
are the Harder Narasimhan filtrations of $\Sym^{mn}(E)$ and $\Sym^{ln}(F)$ respectively as in (\ref{seq9}) and (\ref{seq11}).

Recall that the succesive quotients $S_{A_j}E/S_{A_{j+1}}E$ in (\ref{seq14}) have slope $v_j$ for $j\in\{0,1,2,\cdots,k-1\}$ and similarly the succesive quotients $S_{A'_j}F/S_{A'_{j+1}}F$ in (\ref{seq13}) have slope $v'_j$ for $j\in \{0,1,2,\cdots,p-1\}$ respectively.

Let $J_1=\{0,1,2,\cdots,k-1\}$ and $J_2=\{0,1,2,\cdots,p-1\}$. Consider the product $\Theta = J_1 \times J_2$. 

Then for $\alpha = (a,b) \in \Theta$, we have $$\Im_{\alpha} = S_{A_a}E \otimes S_{A'_b}F.$$ We denote $\mathcal{Q}_{\alpha} = (S_{A_a}E/S_{A_{a+1}}E)\otimes (S_{A'_b}F/S_{A'_{b+1}}F).$ Then $\mathcal{Q}_{\alpha}$ is semistable. 
\vspace{2mm}

Let $$\Sigma = \Bigl\{ \mu_{\alpha}\mid  \alpha \in \Theta\Bigr\},$$ where $\mu_{\alpha} := \mu(\mathcal{Q}_{\alpha}) = \mu\bigl(S_{A_a}E/S_{A_{a+1}}E\bigr) + \mu\bigl(S_{A'_b}F/S_{A'_{b+1}}F\bigr) =  v_a+v'_b$ for $\alpha = (a,b) \in \Theta$. 
\vspace{2mm}

We order the elements of $\Sigma$ as follows:
$$w_0<w_1<\cdots<w_{q-1}.$$
Let $B_j =\{\alpha \in \Theta \mid \mu_{\alpha} \geq w_j\}$ for $j\in\{0,1,\cdots,q-1\}$, with $B_q=\emptyset$. Then by Theorem \ref{thm3.2} we have
\begin{align}\label{seq12}
\Im_{B_{q-1}} \subsetneq \cdots \subsetneq \Im_{B_{1}}\subsetneq \Im_{B_0}
\end{align}
is the Harder-Narasimhan filtration of $\Sym^{mn}(E)\otimes \Sym^{ln}(F)$ such that $\Im_{B_j}/\Im_{B_{j+1}} \cong \bigoplus\limits_{\alpha \in B_j\setminus B_{j+1}}\mathcal{Q}_{\alpha}$ is semistable having slope $w_j$ for each $j\in\{0,1,\cdots,q-1\}$.

Note that $\rk(\mathcal{Q}_{\alpha}) = R_a\cdot R'_b,$ where $\alpha =(a,b) \in \Theta$, and $R_a$ and $R'_b$ are as in (\ref{rk1}) and (\ref{rk2}). Therefore, for $j\in\{0,1,\cdots,q-1\}$, we have $$\mathcal{R}_j:= \rk(\Im_{B_j}/\Im_{B_{j+1}}) = \sum\limits_{\alpha =(a,b) \in B_j\setminus B_{j+1}}R_a\cdot R'_b = \sum\limits_{\substack{\alpha = (a,b) \in \Theta,\\ \mu_{\alpha}=w_j}}R_a\cdot R'_b = \sum\limits_{v_a+v'_b =w_j}R_a\cdot R'_b.$$

\subsection{Expression for $\nu_{\Sym^{mn}(E)\otimes \Sym^{ln}(F)}$}
Therefore by definition we have
\begin{align}\label{eq1}
\nu_{\Sym^{mn}(E)\otimes \Sym^{ln}(F)} = \frac{1}{\rk\bigl(\Sym^{mn}(E)\otimes \Sym^{ln}(F)\bigr)} \sum\limits_{j=0}^{q-1}\mathcal{R}_j\delta_{w_j}
\end{align}
where $\delta_{w_j}$ is the Dirac measure concentrated at $w_j$.
\vspace{2mm}

Next we compute the term $\sum\limits_{j=0}^{q-1}\mathcal{R}_j\delta_{w_j}.$ Recall that $\rk(E) =e$ and $\rk(F) =f$. Consider the vector $$\underline{s} := (s_1,s_2,\cdots,s_e,s'_1,s'_2,\cdots,s'_f) \in \mathbb{R}^{e}\times\mathbb{R}^{f}$$  such that the value $\mu_i$ appear exactly $r_i = \rk(E_i/E_{i+1})$ times in the first $e$ coordinates and the value $\mu_j'$ appears exactly $r'_j= \rk(F_j/F_{j+1})$ times in the last $f$ coordinates.

We rewrite the expressions of $R_a$ and $R'_b$ for $\alpha =(a,b)\in \Theta$ as follows. We have $$R_a\ =\rk(S_{A_a}E/S_{A_{a+1}}E)\hspace{4mm} = \sum\limits_{\substack{\underline{a} =(a_i)_{i=0}^{d-1}\in \mathbb{N}^d,\\ a_0+a_1+\cdots+a_{d-1}=mn, \\ a_0\mu_0+a_1\mu_1+\cdots+a_{d-1}\mu_{d-1} = v_a}} r_{\underline{a}} \hspace{5mm}$$

$$=\sum\limits_{\substack{\underline{a} =(a_i)_{i=0}^{d-1}\in \mathbb{N}^d,\\ a_0+a_1+\cdots+a_{d-1}=mn, \\ a_0\mu_0+a_1\mu_1+\cdots+a_{d-1}\mu_{d-1} = v_a}} \prod\limits_{i=0}^{d-1}\binom{a_i+r_i-1}{{r_i}-1} \hspace{3mm} = \sum\limits_{\substack{\underline{b}=(b_i)_{i=1}^e \in \mathbb{N}^e,\\ b_1+\cdots+b_e=mn,\\ b_1s_1+b_2s_2+\cdots+b_es_e=v_a}}1$$
\vspace{1mm}

This follows from the fact that $\binom{a_i+r_i-1}{{r_i}-1}$ equals the number of compositions of $a_i$ into sum of $r_i$ positive integers.

Similarly $$R'_b =\rk(S_{A'_b}F/S_{A'_{b+1}}F)\hspace{5mm} = \sum\limits_{\substack{\underline{a}' =(a'_i)_{i=0}^{h-1}\in \mathbb{N}^h, \\ a'_0+a'_1+\cdots+a'_{h-1}=ln, \\ a'_0\mu'_0+a'_1\mu'_1+\cdots+a'_{h-1}\mu'_{h-1} = v'_b}} r_{\underline{a}'} \hspace{5mm} = \sum\limits_{\substack{\underline{b}'=(b'_i)_{i=1}^f \in \mathbb{N}^f,\\ b'_1+\cdots+b'_{f}=ln,\\ b'_1s'_1+b'_2s'_2+\cdot+b'_fs'_f=v'_b}}1$$

Thus
\begin{align*}
\nu_{\Sym^{mn}(E)\otimes \Sym^{ln}(F)}
& =  \frac{1}{\rk(\Sym^{mn}(E))\cdot\rk(\Sym^{ln}(F))} \sum\limits_{j=0}^{q-1}\mathcal{R}_j\delta_{w_j}
\\
\vspace{4mm}
& = \frac{1}{\rk(\Sym^{mn}(E))\cdot\rk(\Sym^{ln}(F))} \sum\limits_{j=0}^{q-1}\Bigl(\sum\limits_{v_a+v'_b = w_j} R_a\cdot R'_b\Bigr) \delta_{w_j}
\\
\end{align*}

Our claim is that $$\sum\limits_{j=0}^{q-1}\Bigl(\sum\limits_{v_a+v'_b = w_j} R_a\cdot R'_b\Bigr) \delta_{w_j} = \sum\limits_{\substack{\underline{b} = (b_i)_{i=1}^e \in \mathbb{N}^e, \underline{b}'= (b'_i)_{i=1}^f\in \mathbb{N}^{f},\\ \sum\limits_{i=1}^eb_i=mn,\sum\limits_{i=1}^fb'_i=ln}}\delta_{b_1s_1+b_2s_2+\cdots+b_es_e+b'_1s'_1+b'_2s'_2+\cdots+b'_fs'_f}
$$
Note that  $$\nu_{\Sym^{mn}(E)} = \frac{1}{\rk(\Sym^{mn}(E))}\sum\limits_{a=0}^{k-1}R_a\delta_{v_a} = \frac{1}{\rk(\Sym^{mn}(E))}\sum\limits_{\substack{\underline{b}=(b_i)_{i=1}^e \in \mathbb{N}^e, \\ \sum\limits_{i=1}^eb_i=mn }}\delta_{b_1s_1+b_2s_2+\cdots+b_es_e}$$
Similarly,  $$\nu_{\Sym^{ln}(F)} = \frac{1}{\rk(\Sym^{ln}(F))}\sum\limits_{b=0}^{p-1}R'_b\delta_{v'_b} = \frac{1}{\rk(\Sym^{ln}(F))}\sum\limits_{\substack{\underline{b}' = (b'_i)_{i=1}^f \in \mathbb{N}^f, \\ \sum\limits_{i=1}^fb'_i = ln }}\delta_{b'_1s'_1+b'_2s'_2+\cdots+b'_fs'_f}$$

 Now, our claim follows from this fact and the above observations.

Therefore,
\begin{align*}
& \nu_{\Sym^{mn}(E)\otimes \Sym^{ln}(F)}\\
& =  \frac{1}{\rk(\Sym^{mn}(E))\cdot\rk(\Sym^{ln}(F))} \sum\limits_{j=0}^{q-1}\mathcal{R}_j\delta_{w_j}
\\
\vspace{4mm}
& = \frac{1}{\rk(\Sym^{mn}(E))\cdot\rk(\Sym^{ln}(F))}\sum\limits_{\substack{\underline{b} = (b_i)_{i=1}^e \in \mathbb{N}^e, \underline{b}'= (b'_i)_{i=1}^f\in \mathbb{N}^{f},\\ \sum\limits_{i=1}^eb_i=mn,\sum\limits_{i=1}^fb'_i=ln}}\delta_{b_1s_1+b_2s_2+\cdots+b_es_e+b'_1s'_1+b'_2s'_2+\cdots+b'_fs'_f}
\end{align*}

Let $(X,\mathcal{L})$ and $(Y,\mathcal{G})$ be two measurable spaces. Let $\phi : X\longrightarrow Y $ be a measurable function. Then for any measure $\eta$ on $X$, we have the pushforward of the measure $\eta$ defined as follows:

 For any measurable set $A$ in $\mathcal{G}$, we have
 $$(\phi_*\eta)(A) = \eta(\phi^{-1}(A)).$$

Let $\Delta_m \times \Delta_l \subset \mathbb{R}^e\times \mathbb{R}^f$ be the set, where simplexes $\Delta_m$ and $\Delta_l$ are defined as follows:
$$\Delta_m = \Bigl\{(x_1,x_2,\cdots,x_e)\in\mathbb{R}^e \mid 0\leq x_i \leq m, \sum\limits_{i=1}^{e}x_i = m\Bigr\},$$
$$\Delta_l=\Bigl\{(x'_1,x'_2,\cdots,x'_f)\in \mathbb{R}^f\mid 0\leq x'_j \leq l,\sum\limits_{j=1}^fx'_j= l\Bigr\}.$$
Let $\phi_{\underline{s}} : \Delta_m\times \Delta_l\longrightarrow \mathbb{R}$ be the map such that  $(x_1,x_2,\cdots,x_e,x'_1,\cdots,x'_f)\mapsto\sum\limits_{i=1}^{e} s_ix_i+\sum\limits_{j=1}^{f}s'_jx'_j$.

Consider the product measure on the product space $\Delta_m\times \Delta_l$ and the pushforward of measure under this map $\phi_{\underline{s}}$.
\begin{thm}\label{thm6.3}
 Let $\eta$ be the Lebesgue measure on $\Delta_m\times \Delta_l$ normalized such that $\eta(\Delta_m\times \Delta_l) = m^e\cdot n^f$. Then  $T_{\frac{1}{n}}\nu_{\Sym^{mn}(E)\otimes \Sym^{ln}(F)}$ converges vaguely to $\phi_{\underline{s}*}\eta$, where $$\phi_{\underline{s}} : \Delta_m \times \Delta_l\longrightarrow \mathbb{R}$$ such that\hspace{3cm} $(x_1,x_2,\cdots,x_e,x'_1,\cdots,x'_f)\mapsto\sum\limits_{i=1}^{e} s_ix_i+\sum\limits_{j=1}^{f}s'_jx'_j$.

 Hence $$\nu^{\pi}_{\mathcal{O}_{\mathbb{P}(\mathcal{E})}(m) \otimes \pi_1^*\mathcal{O}_{\mathbb{P}(F)}(l)} = \phi_{\underline{s}*}\eta.$$
 \begin{proof}
  Note that
  \begin{align*}
  & T_{\frac{1}{n}}\nu_{\Sym^{mn}(E)\otimes \Sym^{ln}(F)}
  \\
  & =T_{\frac{1}{n}}\Bigl[\frac{1}{\rk(\Sym^{mn}E)\cdot\rk(\Sym^{ln}(F)}\sum\limits_{\substack{\underline{b} = (b_i)_{i=1}^e \in \mathbb{N}^e, \underline{b}'=(b'_i)_{i=1}^f\in \mathbb{N}^{f},\\ \sum\limits_{i=1}^eb_i=mn,\sum\limits_{i=1}^fb'_i=ln}}\delta_{b_1s_1+b_2s_2+\cdots+b_es_e+b'_1s'_1+b'_2s'_2+\cdots+b'_fs'_f}\Bigr]
  \\
  &=T_{\frac{1}{n}}\Bigl[\frac{1}{\rk(\Sym^{mn}E)\cdot\rk(\Sym^{ln}(F)}\sum\limits_{\substack{\underline{b} = (b_i)_{i=1}^e \in (\frac{1}{n}\mathbb{N}^{e}), \underline{b}'=(b')_{i=1}^{f}\in (\frac{1}{n}\mathbb{N}^{e}), \\ \sum\limits_{i=1}^eb_i=m,\sum\limits_{i=1}^fb'_i=l}}\delta_{b_1+b_2+\cdots+b_e+b'_1+b'_2+\cdots+b'_f}\Bigr].
  \\
  &=T_{\frac{1}{n}}\Bigl[\frac{1}{\rk(\Sym^{mn}E)\cdot\rk(\Sym^{ln}(F)}\sum\limits_{(\underline{b},\underline{b}')\in (\frac{1}{n}\mathbb{N}^{e}\cap \Delta_m)\times (\frac{1}{n}\mathbb{N}^f\cap\Delta_l)}\delta_{(\underline{b},\underline{b}')}\Bigr].
  \end{align*}
  Thus $T_{\frac{1}{n}}\nu_{\Sym^{mn}(E)\otimes \Sym^{ln}(F)}$ converges vaguely to $\phi_{\underline{s}*}\eta$.
 \end{proof}
\end{thm}
\begin{corl}\label{thm6.4}
Consider the fibre product diagram:
\begin{center}
 \begin{tikzcd}
\mathbb{P}(\mathcal{E}) = \mathbb{P}(F)\times_C \mathbb{P}(E) = \mathbb{P}(\psi^*E) \arrow[r, "\pi_2"] \arrow[d, "\pi_1"]
& \mathbb{P}(E)  \arrow[d,""]\\
 \mathbb{P}(F) \arrow[r, "\psi" ]
& C
\end{tikzcd}
\end{center}
 Let $L = \mathcal{O}_{\mathbb{P}(\mathcal{E})}(m) \otimes \pi_1^*\mathcal{O}_{\mathbb{P}(F)}(l)\otimes \pi_1^*\psi^*\mathcal{M}$ be a line bundle on $X = \mathbb{P}(F)\times_C \mathbb{P}(E)$ for some line bundle $\mathcal{M}$ of degree $a$ on $C$ such that $L_{K}$ is big. Then
 $$\vol_X(L) =\dim(X)\vol_{X_K}(L_K) \int (x+a)_+ \phi_{\underline{s}*}\eta (dx),$$
 where $x_+ = \max\{x,0\}$ and $\phi_{\underline{s}}$ is defined as in Theorem \ref{thm6.3}.
 \end{corl}
 \begin{proof}
By Theorem \ref{thm2.1} one has 
\begin{align*}
& \vol_X(L)
 = \vol_X\bigl(\mathcal{O}_{\mathbb{P}(\mathcal{E})}(m)\otimes
\pi_1^*\mathcal{O}_{\mathbb{P}(F)}(l)\otimes \pi_1^*\psi^*\mathcal{M}\bigr)
\\
& = \dim(X) \vol_{X_K}(L_K) \int x_+\nu^{\pi}_L(dx)
\\
& = \dim(X)\vol_{X_K}(L_K) \int (x+a)_+\nu^{\pi}_{\mathcal{O}_{\mathbb{P}(\mathcal{E})}(m) \otimes \pi_1^*\mathcal{O}_{\mathbb{P}(F)}(l)} (dx)
\\
& = \dim(X)\vol_{X_K}(L_K) \int (x+a)_+ \phi_{\underline{s}*}\eta (dx).
\end{align*}
\end{proof}
\begin{exm}
\rm Let $\pi : X = \mathbb{P}_C(E)\longrightarrow C$ be a ruled surface over a smooth complex projective curve $C$.  We recall from \cite{C} the computation of volume function of any line bundle $L$ on $X$ such that $L_K$ is big line bundle on $X_K$.
Let $L = \mathcal{O}_{\mathbb{P}(E)}(m)\otimes \pi^*M$ where $\deg(M) = c$. Then by [Corollary 1.3,\cite{C}] we have $$\vol_X\bigl(\mathcal{O}_{\mathbb{P}(E)}(m)\otimes \pi^*M\bigr) = 2m^2\int \bigl(x+\frac{c}{m}\bigr)_+\nu_{\mathcal{O}_{\mathbb{P}(E)}(1)}^{\pi}(dx) = 2m^2\int \bigl(x+\frac{c}{m}\bigr)_+\phi_{\underline{s}*}\eta (dx)$$ where $\phi_{\underline{s}}$ and $\eta$ are defined as follows:

Let $\Delta\subset \mathbb{R}^2$ be the simplex defined as $\bigl\{(x_1,x_2) \in \mathbb{R}^2\mid 0\leq x_j\leq 1, \sum\limits_{i=1}^2x_i =1\bigr\}$, and $\eta$ be the Lebesgue measure on $\mathbb{R}^2$ which is normalized , i.e. $\eta(\Delta) =1$. Let
$$0=E_2\subsetneq E_1\subsetneq E_0 = E$$ be the Harder-Narasimhan filtration of $E$ with succesive  slopes $\mu_1 > \mu_0$.

Consider the vector $\underline{s} = (\mu_0,\mu_1)\in \mathbb{R}^2$. Then
$$\phi_{\underline{s}} : \Delta \longrightarrow \mathbb{R}$$
defined as $$\phi_{\underline{s}}(x_1,x_2) = \mu_0x_1+\mu_1x_2$$

Then by [Theorem 1.2, \cite{C}] we have $\nu_{\mathcal{O}_{\mathbb{P}(E)}(1)}^{\pi} = \phi_{\underline{s}*}\eta$.
We consider the following two cases :
\begin{itemize}
 \item \bf Case 1 : \rm Let $E$ be a semistable vector bundle on $C$. In this case, $\nu^{\pi}_{\mathcal{O}_{\mathbb{P}(E)}(1)} = \delta_{\mu(E)}$.
Hence $$\Phi(m,c) := \vol_X\bigl(\mathcal{O}_{\mathbb{P}(E)}(m)\otimes \pi^*M\bigr) = 2m^2\int \bigl(x+\frac{c}{m}\bigr)_+\phi_{\underline{s}*}\eta(dx)= 2m^2\bigl(\mu(E)+\frac{c}{m}\bigr)_+ $$
Therefore $$\Phi(m,c) = \begin{cases} \deg(E)m^2+2cm  & ; \text{if} \hspace{2mm} c\geq -\mu(E)m \\
                     0  &  ;\text{elsewhere}
                     \end{cases}$$

\item \bf Case 2 : \rm In this case, we assume that $E$ is an unstable bundle of rank 2 having the Harder-Narasimhan filtration as follows:
$$0=E_2\subsetneq E_1\subsetneq E_0 = E$$
with succesive quotients $\mu_1 > \mu_0$.
Then $\phi_{\underline{s}*}\eta$ is the uniform distribution on the interval $[\mu_0,\mu_1]$.

Hence $$\Phi(m,c) : = \vol_X\bigl(\mathcal{O}_{\mathbb{P}(E)}(m)\otimes \pi^*M\bigr) = \frac{2m^2}{\mu_1-\mu_0}\int\limits_{\mu_0}^{\mu_1}\bigl(x+\frac{c}{m}\bigr)_+(dx)$$
$$= \begin{cases}m^2(\mu_1+\mu_0)+2cm   & ; \text{if} \hspace{2mm} c\geq -m\mu_0 \\
(m\mu_1+c)^2/(\mu_1-\mu_0) & ; \text{if} \hspace{2mm} -m\mu_1 \leq c \leq -m\mu_0 \\

                     0  &  ;\text{if} \hspace{2mm} c<-m\mu_1
                     \end{cases}$$
\end{itemize}
In both the cases, the volume function $\Phi(m,c)$ is a polynomial in $m$ and $c$.
\end{exm}
\begin{xrem}\label{exm6.6}
\rm Let $E$ and $F$ be two semistable vector bundles of rank $e$ and $f$ respectively on a smooth complex projective curve $C$. We consider the fiber product $\pi: X = \mathbb{P}_C(E)\times_C \mathbb{P}_C(F) \longrightarrow C$.
In this case, $X \simeq \mathbb{P}_{\mathbb{P}(F)}(\pi_1^*E)$ where $\pi_1 : \mathbb{P}(F)\longrightarrow C$ is the projection map and $\pi_1^*E$ is semistable with $c_2(\End(\pi_1^*E)) = 0$. Thus $\Nef^1\bigl(\mathbb{P}(E)\times_C \mathbb{P}(F)\bigr) = \overline{\Eff}^1\bigl(\mathbb{P}(E)\times_C \mathbb{P}(F)\bigr)$. Hence for any divisor class $L\in \overline{\Eff}^1\bigl(\mathbb{P}(E)\times_C \mathbb{P}(F)\bigr)$, we have $\vol_X(L) = L^{\dim(X)}$.

Let $L=\mathcal{O}_{\mathbb{P}(\mathcal{E})}(m)\otimes \pi_1^*\mathcal{O}_{\mathbb{P}(F)}(l)\otimes \pi^*M$ be a line bundle on $X$ such that $L_K$ is big on $X_K$ (so that $m>0, l>0$), and $M$ is  a line bundle on $C$ of degree $a$.

 Note that as both $E$ and $F$ are semistable vector bundles on $C$, every symmetric powers of $E$ and $F$ and their tensor products are also semistable vector bundles.

Thus
$T_{\frac{1}{n}}\nu_{\pi_*(\mathcal{O}_{\mathbb{P}(\mathcal{E})}(m)\otimes \pi_1^*\mathcal{O}_{\mathbb{P}(F)}(m))^{\otimes n}} = T_{\frac{1}{n}}\nu_{\Sym^{mn}(E)\otimes \Sym^{ln}(F)}$ converges vaguely to \\
$\nu^{\pi}_{\mathcal{O}_{\mathbb{P}(\mathcal{E})}(m)\otimes \pi_1^*\mathcal{O}_{\mathbb{P}(F)}(l)} = \nu_{\Sym^{m}(E)\otimes \Sym^{l}(F)}$ which is the Dirac measure $\delta_{m\mu(E)+l\mu(F)}$ concentrated at $m\mu(E)+l\mu(F)$.

Therefore
\begin{align*}
 & \Phi(m,l,a) := \vol_X(L)
 \\
 & = \dim(X)\vol_{X_K}(L_K)\int (x+a)_+ \nu^{\pi}_{\mathcal{O}_{\mathbb{P}(\mathcal{E})}(m)\otimes \pi_1^*\mathcal{O}_{\mathbb{P}(F)}(l)} (dx)
 \\
 &= \dim(X)\vol_{X_K}(L_K)\int (x+a)_+ \delta_{m\mu(E)+l\mu(F)}(dx)
 \\
 &=  \begin{cases}    \dim(X)\vol_{X_K}(L_K)\bigl(m\mu(E)+l\mu(F)+a\bigr)\hspace{4mm}; \text{if} \hspace{2mm} a\geq -m\mu(E)-l\mu(F) \\
                     0   \hspace{7.5cm}; \text{elsewhere}
                     \end{cases}
\end{align*}
Note that $\dim(X) = e+f-1$ and $\vol_{X_K}(L_K) = L_K^{e+f-2}$ which is a polynomial in $m$ and $l$. Thus $\vol_X(L)$ is a polynomial in $m$, $l$ and $a$.
\end{xrem}
We end this section with the following explicit computation of volume function on the fiber product of two ruled surfaces.
\begin{exm}
\rm Let $\pi : X = \mathbb{P}(E)\times_C\mathbb{P}(F) \longrightarrow C$ be fiber product of two ruled surfaces, and Let $L=\mathcal{O}_{\mathbb{P}(\mathcal{E})}(m)\otimes \pi_1^*\mathcal{O}_{\mathbb{P}(F)}(l)\otimes \pi^*M$ be a line bundle on $X$ such that $L_K$ is big on $X_K$ (so that $m>0, l>0$), and $M$ is  a line bundle on $C$ of degree $a$.
Consider the following cases :
\begin{itemize}
 \item \bf Case 1 : \rm Suppose both $E$ and $F$ are semistable rank 2 bundles. Then
 \begin{align*}
 & \Phi(m,l,a) := \vol_X(L)
 \\
 &=  \begin{cases}    6ml\bigl\{m\mu(E)+l\mu(F)+a\bigr\}\hspace{3mm}; \text{if} \hspace{2mm} a\geq -m\mu(E)-l\mu(F) \\
                     0   \hspace{4.8cm}; \text{elsewhere}
                     \end{cases}
\end{align*}
\item \bf Case 2 : \rm Suppose $F$ is semistable and $E$ is unstable having Harder-Narasimhan filtration $$0=E_2\subsetneq E_1 \subsetneq E_0 = E$$ with succesive slopes $\mu_1>\mu_0$.
Then
\begin{align*}
 & \Phi(m,l,a) := \vol_X(L)
 \\
 &=  \begin{cases}    3ml\bigl\{m(\mu_1+\mu_0)+2l\mu(F)+2a\bigr\}\hspace{6mm}; \text{if} \hspace{2mm} a\geq -m\mu_0-l\mu(F)\vspace{4mm} \\
  3l\bigl\{(m\mu_1+l\mu(F)+a)^2/(\mu_1-\mu_0)\bigr\}\hspace{2.5mm}; \text{if} \hspace{2mm} -m\mu_1-l\mu(F) \leq a \leq -m\mu_0-l\mu(F)\vspace{4mm}\\
                     0   \hspace{6.3cm}; \text{elsewhere}
                     \end{cases}
\end{align*}
\item \bf Case 3 : \rm Suppose both $E$ and $F$ are unstable. Let $$0=E_2\subsetneq E_1 \subsetneq E_0 = E$$ be the Harder-Narasimhan filtration with succesive slopes $\mu_1>\mu_0$.
Similarly, let $$0=F_2\subsetneq F_1 \subsetneq F_0 = F$$ be the Harder-Narasimhan filtration with succesive slopes $\mu'_1>\mu'_0$.

Then \begin{align*}
 & \Phi(m,l,a) := \vol_X(L)
 \\
 &=  \begin{cases}    3ml\bigl\{m(\mu_1+\mu_0)+l(\mu_1'+\mu'_0)+2a\bigr\}\hspace{6mm}; \text{if} \hspace{2mm} a\geq -m\mu_0-l\mu'_0\vspace{4mm} \\
  3l\bigl\{(m\mu_1+l\mu'_1+a)^2/(\mu_1-\mu_0)\bigr\}\hspace{13mm}; \text{if} \hspace{2mm} -m\mu_1-l\mu'_1 \leq a \leq -m\mu_0-l\mu'_0\vspace{4mm}\\
                     0   \hspace{6.9cm}; \text{elsewhere}
                     \end{cases}
\end{align*}
\end{itemize}
In all the cases, the volume function $\Phi(m,l,a) := \vol_X(L)$ is a polynomial in the variables $m,l,a$.

\end{exm}

\section{Acknowledgement}
We are grateful to the referee for a careful reading and the many suggestions which improved the paper.  The first author is partially supported   by SERB-NPDF fellowship (File no : PDF/2021/00028). The second author is partially supported by SERB SRG Grant SRG/2023/001006. The first author also like to thank IIT Bombay for its hospitality where the work is initiated.


\begin{thebibliography}{************}

\normalsize
\baselineskip=18pt

\bibitem[BB08]{B-B}{Indranil Biswas and Ugo Bruzzo,}
\emph{On Semistable Principal Bundles over a Complex Projective Manifold,}
International Mathematics Research Notices, Article ID rnn035, (2008).

\bibitem[BDPP13]{BDPP13} S\'{e}bastien Boucksom, Jean-Pierre Demailly, Mihai P\u{a}un and Thomas Peternell,
\emph{The pseudo-effective cone of a compact K\"{a}hler manifold and varieties of negative Kodaira dimension,}
Journal of Algebraic Geometry. 22(2), (2013) 201-248.


\bibitem[BKS04]{BKS03}Thomas Bauer, Alex Küronya, and Tomasz Szemberg,
\emph{Zariski chambers, volume and stable base loci,}
J. reine angew. Math. 576 (2004), 209-233.


\bibitem[C11]{C} Huayi Chen,
\emph{Computing volume function on projective bundle over a curve,}
 RIMS K\^{o}ky\^{u}roku, 1745  (2011), 169-182.

\bibitem[ELMN03]{ELMN03} Lawrence Ein, Robert Lazarsfeld, Mircea Mustata, Michael Nakamaye, and Mihnea Popa,
\emph{Asymptotic invariants of base loci,}
Tome 56, no 6 (2006), p. 1701-1734.

 \bibitem[ELMNP05]{ELMNP} Lawrence Ein, Robert Lazarsfeld, Mircea Mustat\v{a}, Michael Nakamaye and Mihnea Popa,
\emph{Asymptotic invariants of line bundles,}
Pure and Applied Mathematics Quarterly, 1 (2), (2005),  no. 2, 379-403.


\bibitem[F94]{F} Takao Fujita,
\emph{Approximating Zariski decomposition of big line bundles,}
Kodai Mathematical Journal, 17,  no. 1, (1994), p. 1-3.

\bibitem[F11]{Fu} Mihai Fulger,
\emph{The cones of effective cycles on projective bundles over curves,}
Mathematische Zeitschrift,  269, (2011) 449-459.

\bibitem[FL22]{FL} Mihai Fulger \ and\ Adrian Langer,
\emph{Positivity vs. slope semistability for bundles with vanishing discriminant,}
Journal of Algebra, 609 (2022) 657–687.


\bibitem[F98]{F98} William Fulton.
\emph{Intersection Theory,}
Second Edition, Springer, (1998).


\bibitem[L04a]{L1} Robert Lazarsfeld,
\emph{Positivity in Algebraic Geometry,} Volume I, Springer, (2004).

\bibitem[L04b]{L2} Robert Lazarsfeld,
\emph{Positivity in Algebraic Geometry,} Volume II, Springer, (2004).




\bibitem[M21]{M21} Snehajit Misra,
\emph{Pseudo-effective cones of projective bundles and weak Zariski decomposition,}  European Journal of Mathematics, 7,(2021),  1438-1457.

\bibitem[M87]{Miy87} Yoichi Miyaoka,
\emph{The Chern classes and Kodaira dimension of a minimal variety,}
Algebraic geometry, Sendai,
1985, Adv. Stud. Pure Math., vol. 10, North-Holland, Amsterdam, (1987), pp. 449-476.

\bibitem[N99]{N99} Noboru Nakayama,
\emph{Normalized Tautological divisors of semi-stable vector bundles,}
(Japanese) Free resolutions of coordinate rings of projective varieties and related topics (Japanese) (Kyoto, 1998). Sūrikaisekikenkyūsho Kōkyūroku No. 1078 (1999), 167-173.

\bibitem[T07]{T} Satoshi Takagi,
\emph{Fujita’s approximation theorem in positive characteristics,}
Journal of Mathematics of Kyoto University, 47, no. 1, (2007), p. 179-202.

\bibitem[W05]{W}  Alexandre Wolfe,
\emph{Asymptotic invariants of graded systems of ideals and linear systems on projective bundles,}
Thesis (Ph.D.)-University of Michigan. (2005). 102 pp. ISBN: 978-0542-30309-8.






\end{thebibliography}
\end{document}